\title[An alternative approach on affine term structure models]{An alternative approach on the existence of affine realizations for HJM term structure models}
\author{Stefan Tappe}
\address{ETH Z\"urich, Department of Mathematics, R\"amistrasse 101, CH-8092 Z\"urich, Switzerland}
\email{stefan.tappe@math.ethz.ch}
\newif\ifpdf
\numberwithin{equation}{section} \swapnumbers
\newtheorem{satz}{Satz}[section]
\newtheorem{theorem}[satz]{Theorem}
\newtheorem{proposition}[satz]{Proposition}
\newtheorem{corollary}[satz]{Corollary}
\newtheorem{lemma}[satz]{Lemma}
\newtheorem{assumption}[satz]{Assumption}
\newtheorem{definition}[satz]{Definition}
\newtheorem{remark}[satz]{Remark}
\begin{document}

\maketitle\thispagestyle{empty}

\begin{abstract}
We propose an alternative approach on the existence of affine
realizations for HJM interest rate models. It is applicable to
a wide class of models, and simultaneously it is conceptually rather comprehensible.
We also supplement some known existence results for particular volatility
structures and provide further insights into the
geometry of term structure models.
\bigskip

\textbf{Key Words:} Geometry of interest rate models, invariant
foliations, affine realizations, Riccati equations.
\end{abstract}

\keywords{91G80, 60H15}

\section{Introduction}

A zero coupon bond with maturity $T$ is a financial asset which pays the holder one unit of cash at $T$. Its price at $t \leq T$ can be written as the continuous discounting of one unit of cash
\begin{align*}
P(t,T) = \exp \bigg( -\int_t^T f(t,s) ds \bigg),
\end{align*}
where $f(t,T)$ is the rate prevailing at time $t$ for instantaneous borrowing at time $T$, also called the forward rate for date $T$. The classical continuous framework for the evolution of the forward rates goes back to Heath, Jarrow and Morton (HJM) \cite{HJM}. They assume that, for every date $T$, the forward rates $f(t,T)$ follow an It\^o process of the form
\begin{align*}
df(t,T) = \alpha_{\rm HJM}(t,T) dt + \sigma(t,T) dW_t,\quad t\in [0,T]
\end{align*}
where $W$ is a Wiener process. Note that such an HJM interest rate model is an infinite dimensional object, because for every date of maturity $T \geq 0$ we have an It\^{o} process.

There are several reasons why, in practice, we are interested in the existence of a finite dimensional realization, that is, the forward rate evolution can be described by a finite dimensional state process. Such a finite dimensional realization ensures larger analytical tractability of the model, for example, in view of option pricing. Moreover, as argued in \cite{bautei:05}, HJM models without a finite dimensional realization do not seem reasonable, because then the support of the forward rate curves $f(t,t+\cdot)$, $t > 0$ becomes to large, and hence any ``shape'' of forward rate curves, which we assume from the beginning to model the
market phenomena, is destroyed with positive probability.

The problem concerning the existence and construction of finite
dimensional realizations for HJM interest
rate models has been studied, for various special cases,
in \cite{Jeffrey, Ritchken, Duffie-Kan, Bhar, Inui, Bj_Ch, Bj_Go,
Kwon_2001, Kwon_2003}, and has finally completely been solved in
\cite{Bj_Sv, Bj_La, Filipovic}, see also \cite{Bjoerk} for a survey.

The main idea is to switch to the Musiela parametrization of forward curves
$r_t(x) = f(t,t+x)$ (see \cite{Musiela}), and to consider the forward rates as the solution of
a stochastic partial differential equation (SPDE), the so-called
HJMM (Heath--Jarrow--Morton--Musiela) equation
\begin{align}\label{HJMM}
\left\{
\begin{array}{rcl}
dr_t & = & (\frac{d}{dx} r_t + \alpha_{\rm HJM}(r_t))dt +
\sigma(r_{t})dW_t
\medskip
\\ r_0 & = & h_0
\end{array}
\right.
\end{align}
on a suitable Hilbert space $H$ of forward curves, where
$\frac{d}{dx}$ denotes the differential operator, which is generated
by the strongly continuous semigroup $(S_t)_{t \geq 0}$ of shifts.

The bank account $B$ is the riskless asset, which starts with one unit of cash and grows continuously at time $t$ with the short rate $r_t(0)$, i.e.
\begin{align*}
B(t) = \exp \bigg( \int_0^t r_s(0) ds \bigg), \quad t \geq 0.
\end{align*}
According to \cite{ds94}, the implied bond market, which we can now express as
\begin{align}\label{bond-market}
P(t,T) = \exp \bigg( -\int_t^{T-t} r_t(x)dx \bigg), \quad 0 \leq t
\leq T
\end{align}
is free of arbitrage if there exists an equivalent (local)
martingale measure $\mathbb{Q} \sim \mathbb{P}$ such that the discounted bond prices
$$
\frac{P(t,T)}{B(t)}, \quad t \in [0,T]
$$
are local $\mathbb{Q}$-martingales for all maturities $T$. If we formulate the
HJMM equation (\ref{HJMM}) with respect to such an equivalent
martingale measure $\mathbb{Q} \sim \mathbb{P}$, then the drift is determined by the
volatility, i.e. $\alpha_{\rm HJM} : H \rightarrow H$ in
(\ref{HJMM}) is given by the HJM drift condition (see \cite{HJM})
\begin{align}\label{HJM-drift}
\alpha_{\rm HJM}(h) &= \sigma(h) \int_0^{\bullet} \sigma(h)(\eta)
d\eta = \frac{1}{2} \frac{d}{dx} \bigg( \int_0^{\bullet}
\sigma(h)(\eta) d\eta \bigg)^2, \quad h \in H.
\end{align}
Now, we can consider the problem from a geometric point of view, and
the existence of a finite dimensional realization just means the
existence of an invariant manifold, i.e. a finite dimensional submanifold, which the forward rate process never leaves. Applying the Frobenius Theorem
we obtain the following necessary and sufficient condition for the
existence of an invariant manifold, namely
\begin{align*}
{\rm dim} \{ \beta,\sigma \}_{\rm LA} < \infty,
\end{align*}
i.e. the so-called Lie algebra generated by the vector fields
\begin{align*}
h \mapsto \beta(h) := \frac{d}{dx} h + \alpha_{\rm HJM}(h) - \frac{1}{2}
D\sigma(h)\sigma(h)
\end{align*}
and $h \mapsto \sigma(h)$ must be locally of finite dimension. These
are the essential ideas of the mentioned articles \cite{Bj_Sv,
Bj_La, Filipovic}.

The technical problem with this approach is that the differential
operator $\frac{d}{dx}$ is, in general, an unbounded and therefore
non-smooth operator. Bj\"ork et al. \cite{Bj_Sv, Bj_La} choose the
state space $H$ so small such that $\frac{d}{dx}$ becomes bounded.
As a consequence, not all forward curves of basic HJM models belong
to this space, as for example the forward curves implied by a
Cox-Ingersoll-Ross model, see \cite[Sec. 1]{Filipovic}.

Filipovi\'c and Teichmann \cite{Filipovic} solved this problem by
using convenient analysis on Fr\'echet spaces, developed in
\cite{KriMic:97}, which, however, is far from being trivial to carry
out. In their paper, they in particular show that any HJM model with
a finite dimensional realization necessarily has an affine term
structure.

The contribution of the present paper is to propose an alternative
approach, which is characterized by the following two major
features:

\begin{itemize}
\item We work on the Hilbert space $H$ from \cite[Sec. 5]{fillnm}, which
is large enough to capture any reasonable forward curve. As already mentioned, Bj\"ork et al. \cite{Bj_Sv, Bj_La} choose the space
$H$ such that the differential operator $\frac{d}{dx}$ is bounded,
whence it is rather small.

\item Simultaneously, this article does not require knowledge about
convenient analysis on Fr\'echet spaces. This rather technical machinery is
used in Filipovi\'c and Teichmann \cite{Filipovic}. We avoid this framework by
directly focusing on affine realizations, which, due to the
mentioned result from \cite{Filipovic}, does not mean a restriction.
This makes our approach rather comprehensible.
\end{itemize}

Summing up, we present an alternative approach on the existence of affine realizations
for HJM models, which is applicable to a wide class of models, and which is conceptually accessible to a wide readership.

Our approach also allows us to supplement some existence results for
particular volatility structures from \cite{Bj_Sv} (see our comments
in Remarks \ref{remark-supp-1}, \ref{remark-supp-2}) and to provide
further insights into the geometry of term structure models (see our
comments in Remarks \ref{remark-geom-0}, \ref{remark-geom-1},
\ref{remark-geom-2}).

%In geometric terms, the existence of an affine realization means the
%existence of an invariant foliation of affine manifolds, see
%Definition \ref{def-inv-foliation} below. In order to obtain results
%concerning the existence of invariant foliations, the appropriate
%alternative to the Frobenius Theorem are so-called Riccati
%equations, which naturally occur when dealing with affine
%realizations, see, e.g., Duffie and Kan \cite{Duffie-Kan}.

Before we finish this introduction with overviewing the rest of the paper, let us briefly mention another geometric approach
for modelling zero coupon bonds, which is conceptually completely different from the present HJM framework, but also leads to an invariance problem. It was suggested by Brody and Hughston \cite{brohug} and is inspired by methods from information geometry.
They define the bond prices as
\begin{align*}
P(t,T) = \int_{T-t}^{\infty} \rho_t(x) dx,
\end{align*}
where every $\rho_t$ is a density on $\mathbb{R}_+$. In order to introduce the densities, the authors in \cite{brohug} consider a process $\xi$ on the state space $H = L^2(\mathbb{R}_+)$, which -- by construction -- stays in the positive orthant of the unit sphere $\mathcal{S}$ in the Hilbert space $H$. This implies that $\rho_t = \sqrt{\xi_t}$ is indeed a density.

The rest of this paper is organized as follows. In Section
\ref{sec-foliations} we provide results on invariant foliations and
in Section \ref{sec-real-general} on affine realizations for general
stochastic partial differential equations. In Section
\ref{sec-space} we introduce the space of forward curves. Working on
this space, we present a result concerning invariant foliations for
the HJMM equation (\ref{HJMM}) in Section \ref{sec-HJMM-fol}. Using
this result, we study the existence of affine realizations for the
HJMM equation (\ref{HJMM}) with general volatility in Section \ref{sec-vol-general}, and for various particular
volatility structures in Sections \ref{sec-cdv}--\ref{sec-short-rate}. Finally, Section \ref{sec-conclusion} concludes.

\section{Invariant foliations for general stochastic partial differential
equations}\label{sec-foliations}

In this section, we provide results on invariant foliations for
general stochastic partial differential equations, which we will apply
to the HJMM equation (\ref{HJMM}) later on.

From now on, let $(\Omega,\mathcal{F},(\mathcal{F}_t)_{t \geq
0},\mathbb{P})$ be a filtered probability space satisfying the usual
conditions and let $W$ be a real-valued Wiener process.

Here, we shall deal with stochastic partial
differential equations of the type
\begin{align}\label{SPDE-manifold}
\left\{
\begin{array}{rcl}
dr_t & = & (A r_t + \alpha(r_t))dt + \sigma(r_{t})dW_t
\medskip
\\ r_0 & = & h_0
\end{array}
\right.
\end{align}
on a separable Hilbert space $(H,\| \cdot \|,\langle \cdot,\cdot
\rangle)$. In (\ref{SPDE-manifold}), the operator $A :
\mathcal{D}(A) \subset H \rightarrow H$ is the infinitesimal
generator of a $C_0$-semigroup $(S_t)_{t \geq 0}$ on $H$ with
adjoint operator $A^* : \mathcal{D}(A^*) \subset H \rightarrow H$.
Recall that the domains $\mathcal{D}(A)$ and $\mathcal{D}(A^*)$ are
dense in $H$, see, e.g., \cite[Thm. 13.35.c, Thm. 13.12]{Rudin}.

Concerning the vector fields $\alpha,\sigma : H \rightarrow H$ we
impose the following conditions.

\begin{assumption}\label{ass-SPDE}
We assume that $\alpha,\sigma \in C^1(H)$ and that there is a
constant $L > 0$ such that
\begin{align}\label{Lipschitz-alpha-SPDE}
\| \alpha(h_1) - \alpha(h_2) \| &\leq L \| h_1 - h_2 \|,
\\ \label{Lipschitz-sigma-SPDE} \| \sigma(h_1) - \sigma(h_2) \| &\leq L \| h_1 - h_2 \|
\end{align}
for all $h_1,h_2 \in H$.
\end{assumption}

The Lipschitz assumptions (\ref{Lipschitz-alpha-SPDE}),
(\ref{Lipschitz-sigma-SPDE}) ensure that for each $h_0 \in H$ there
exists a unique weak solution for (\ref{SPDE-manifold}) with $r_0 =
h_0$, see \cite[Thm. 6.5, Thm. 7.4]{Da_Prato}.

\begin{definition}
A subset $U \subset H$ is called {\rm invariant} for
(\ref{SPDE-manifold}) if for every $h \in U$ we have
\begin{align*}
\mathbb{P}(r_t \in U) = 1 \quad \text{for all $t \geq 0$}
\end{align*}
where $(r_t)_{t \geq 0}$ denotes the weak solution for
(\ref{SPDE-manifold}) with $r_0 = h$.
\end{definition}

In what follows, let $V \subset H$ be a finite dimensional linear
subspace and $d := \dim V$.

\begin{definition}
A family $(\mathcal{M}_t)_{t \geq 0}$ of affine subspaces
$\mathcal{M}_t \subset H$, $t \geq 0$ is called a {\rm foliation
generated by $V$} if there exists $\psi \in C^1(\mathbb{R}_+;H)$
such that
\begin{align}\label{def-foliation}
\mathcal{M}_t = \psi(t) + V, \quad t \geq 0.
\end{align}
The map $\psi$ is a {\rm parametrization} of the foliation
$(\mathcal{M}_t)_{t \geq 0}$.
\end{definition}

\begin{remark}\label{remark-para}
Note that the parametrization of a foliation $(\mathcal{M}_t)_{t
\geq 0}$ generated by $V$ is not unique. However, due to condition
(\ref{def-foliation}), for two parametrizations $\psi_1$, $\psi_2$
we have
\begin{align*}
\psi_1(t) - \psi_2(t) \in V \quad \text{for all $t \geq 0$.}
\end{align*}
\end{remark}

In what follows, let $(\mathcal{M}_t)_{t \geq 0}$ be a foliation
generated by $V$. For every $t \geq 0$ the set $\pi_{V^{\perp}}
\mathcal{M}_t$ consists of exactly one point. Therefore, the map
\begin{align*}
\psi : \mathbb{R}_+ \rightarrow H, \quad \psi(t) := \pi_{V^{\perp}}
\mathcal{M}_t
\end{align*}
is well-defined, and it is the unique parametrization of the
foliation $(\mathcal{M}_t)_{t \geq 0}$ such that $\psi(t) \in
V^{\perp}$ for all $t \geq 0$.

\begin{definition}
For each $t \geq 0$ we define the {\em tangent space}
\begin{align*}
T \mathcal{M}_t := \psi'(t) + V.
\end{align*}
\end{definition}

By Remark \ref{remark-para}, the definition of the tangent is
independent of the choice of the parametrization.

%Furthermore, let $\psi \in C^1(\mathbb{R}_+;H)$ be given. We define
%the {\em foliation} $(\mathcal{M}_t)_{t \geq 0}$ of affine
%submanifolds
%\begin{align}\label{def-foliation}
%\mathcal{M}_t := \psi(t) + V, \quad t \geq 0.
%\end{align}

\begin{definition}\label{def-inv-foliation}
The foliation $(\mathcal{M}_t)_{t \geq 0}$ of submanifolds is {\rm
invariant} for (\ref{SPDE-manifold}) if for every $t_0 \in
\mathbb{R}_+$ and $h \in \mathcal{M}_{t_0}$ we have
\begin{align}\label{cond-foliation}
\mathbb{P}(r_t \in \mathcal{M}_{t_0 + t}) = 1 \quad \text{for all $t
\geq 0$}
\end{align}
where $(r_t)_{t \geq 0}$ denotes the weak solution for
(\ref{SPDE-manifold}) with $r_0 = h$.
\end{definition}

As we shall see now, an invariant foliation generated by $V$,
provided it exists, is unique.

\begin{lemma}\label{lemma-unique-foli}
Let $(\mathcal{M}_t^i)_{t \geq 0}$, $i=1,2$ be two foliations
generated by $V$ with $\mathcal{M}_0^1 \cap \mathcal{M}_0^2 \neq
\emptyset$, which are invariant for (\ref{SPDE-manifold}). Then we
have $\mathcal{M}_t^1 = \mathcal{M}_t^2$ for all $t \geq 0$.
\end{lemma}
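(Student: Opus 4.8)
The plan is to exploit the elementary fact that two affine subspaces sharing the same direction space $V$ are either disjoint or identical, combined with the existence of a weak solution guaranteed by Assumption~\ref{ass-SPDE} and the invariance hypothesis. Write $\mathcal{M}_t^i = \psi_i(t) + V$ with parametrizations $\psi_i \in C^1(\mathbb{R}_+;H)$, $i=1,2$.

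First I would record the geometric fact underlying Remark~\ref{remark-para}: if $(a_1 + V) \cap (a_2 + V) \neq \emptyset$, then writing a common point as $a_1 + v_1 = a_2 + v_2$ with $v_1,v_2 \in V$ shows $a_1 - a_2 \in V$, hence $a_1 + V = a_2 + V$. Applied at $t=0$, the hypothesis $\mathcal{M}_0^1 \cap \mathcal{M}_0^2 \neq \emptyset$ already yields $\mathcal{M}_0^1 = \mathcal{M}_0^2$; fix any $h$ in this common set.

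Next I would run the invariance argument. By Assumption~\ref{ass-SPDE} there exists a (unique) weak solution $(r_t)_{t \geq 0}$ of (\ref{SPDE-manifold}) with $r_0 = h$. Applying Definition~\ref{def-inv-foliation} with $t_0 = 0$ to each of the two invariant foliations gives $\mathbb{P}(r_t \in \mathcal{M}_t^1) = 1$ and $\mathbb{P}(r_t \in \mathcal{M}_t^2) = 1$ for every $t \geq 0$. Since the complement of $\{ r_t \in \mathcal{M}_t^1 \cap \mathcal{M}_t^2 \}$ is contained in the union of two null sets, we obtain $\mathbb{P}(r_t \in \mathcal{M}_t^1 \cap \mathcal{M}_t^2) = 1$, so in particular $\mathcal{M}_t^1 \cap \mathcal{M}_t^2 \neq \emptyset$ for every $t \geq 0$. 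By the geometric fact above, this forces $\mathcal{M}_t^1 = \mathcal{M}_t^2$ for all $t \geq 0$, which is the assertion.

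There is no serious obstacle here; the only point requiring minor care is that the non-emptiness of $\mathcal{M}_t^1 \cap \mathcal{M}_t^2$ is extracted from a probability-one statement, which presupposes that a weak solution starting at $h$ actually exists — and this is precisely what the global Lipschitz conditions (\ref{Lipschitz-alpha-SPDE})--(\ref{Lipschitz-sigma-SPDE}) in Assumption~\ref{ass-SPDE} furnish, via \cite[Thm. 6.5, Thm. 7.4]{Da_Prato}. Everything else is linear algebra together with elementary measure theory, and the argument uses nothing about the $C^1$ regularity of the $\psi_i$ beyond what is built into the definition of a foliation.
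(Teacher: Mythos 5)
Your proof is correct and is essentially the paper's own argument: the paper runs the weak solution from a common starting point and observes $\pi_{V^{\perp}} \mathcal{M}_t^1 = \pi_{V^{\perp}} r_t = \pi_{V^{\perp}} \mathcal{M}_t^2$, which is just the projection-language version of your observation that two cosets of $V$ sharing a point must coincide. The only difference is presentational — you make explicit the measure-theoretic step that the intersection of two almost-sure events is nonempty, which the paper leaves implicit.
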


\begin{proof}
Choose $h_0 \in \mathcal{M}_0^1 \cap \mathcal{M}_0^2$ and let
$(r_t)_{t \geq 0}$ be the weak solution for (\ref{SPDE-manifold})
with $r_0 = h_0$. Then we have
\begin{align*}
\pi_{V^{\perp}} \mathcal{M}_t^1 = \pi_{V^{\perp}} r_t =
\pi_{V^{\perp}} \mathcal{M}_t^2, \quad t \geq 0
\end{align*}
which completes the proof.
\end{proof}

\begin{proposition}\label{prop-transform}
Suppose the foliation $(\mathcal{M}_t)_{t \geq 0}$ of submanifolds
is invariant for (\ref{SPDE-manifold}) and let $\ell \in
L(H;\mathbb{R}^d)$ be a continuous linear operator with $\ell(V) =
\mathbb{R}^d$. Then, for every $t_0 \in \mathbb{R}_+$ and $h \in
\mathcal{M}_{t_0}$ we have almost surely
\begin{align}\label{decomposition}
r_t = \pi_{V^{\perp}} \mathcal{M}_{t_0 + t} + \ell^{-1}(\ell(r_t) -
\ell \pi_{V^{\perp}} \mathcal{M}_{t_0 + t}), \quad t \geq 0
\end{align}
where $(r_t)_{t \geq 0}$ denotes the weak solution for
(\ref{SPDE-manifold}) with $r_0 = h$, and (\ref{decomposition}) is
the decomposition of $(r_t)_{t \geq 0}$ according to $V^{\perp}
\oplus V$.
\end{proposition}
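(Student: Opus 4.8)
The plan is to deduce (\ref{decomposition}) directly from the invariance of the foliation, the fact that each set $\pi_{V^{\perp}}\mathcal{M}_t$ is a singleton, and the elementary observation that $\ell$ restricts to a linear isomorphism of $V$ onto $\mathbb{R}^d$. Concretely, since $\ell(V) = \mathbb{R}^d$ and $\dim V = d$, the restriction $\ell|_V : V \to \mathbb{R}^d$ is bijective; let $\ell^{-1} := (\ell|_V)^{-1} : \mathbb{R}^d \to V$ denote its inverse, which is the operator appearing in (\ref{decomposition}). Note also that $V$ is finite dimensional, hence closed, so $H = V^{\perp} \oplus V$ orthogonally and the projections $\pi_{V^{\perp}}$, $\pi_V$ are well defined.

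First I would fix $t_0 \in \mathbb{R}_+$ and $h \in \mathcal{M}_{t_0}$, let $(r_t)_{t \geq 0}$ be the weak solution for (\ref{SPDE-manifold}) with $r_0 = h$, and write $\psi(t) := \pi_{V^{\perp}}\mathcal{M}_t$ for the canonical parametrization with $\psi(t) \in V^{\perp}$. Since $\mathcal{M}_{t_0+t} = \psi(t_0+t) + V$ and $\psi(t_0+t) \in V^{\perp}$, the event $\{ r_t \in \mathcal{M}_{t_0+t} \}$ coincides with $\{ \pi_{V^{\perp}} r_t = \psi(t_0+t) \}$. By Definition \ref{def-inv-foliation}, for each fixed $t \geq 0$ this event has probability one. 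Evaluating along a countable dense subset of $\mathbb{R}_+$ and invoking the continuity of the sample paths of $r$ together with the continuity of $\psi$, one obtains a single null set outside of which $\pi_{V^{\perp}} r_t = \psi(t_0+t) = \pi_{V^{\perp}}\mathcal{M}_{t_0+t}$ holds simultaneously for all $t \geq 0$.

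On this set of full probability I would then write $r_t = \pi_{V^{\perp}} r_t + \pi_V r_t$ and identify the two summands. The first one is $\pi_{V^{\perp}}\mathcal{M}_{t_0+t}$ by the previous step. For the second one, $\pi_V r_t \in V$, and applying $\ell$ and using linearity yields $\ell(\pi_V r_t) = \ell(r_t) - \ell(\pi_{V^{\perp}} r_t) = \ell(r_t) - \ell\,\pi_{V^{\perp}}\mathcal{M}_{t_0+t}$; applying $\ell^{-1}$ gives $\pi_V r_t = \ell^{-1}(\ell(r_t) - \ell\,\pi_{V^{\perp}}\mathcal{M}_{t_0+t}) \in V$, which is precisely the $V$-component in (\ref{decomposition}). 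This completes the argument.

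I expect the only genuinely delicate point to be the upgrade from the ``for each fixed $t$'' invariance statement of Definition \ref{def-inv-foliation} to the ``almost surely for all $t \geq 0$'' conclusion needed in (\ref{decomposition}); this rests on choosing a modification of $r$ with continuous trajectories (available under Assumption \ref{ass-SPDE}, cf. \cite{Da_Prato}) and on the continuity of the parametrization $\psi$, and is where a small amount of care is required. Everything else is linear algebra on the finite dimensional space $V$.
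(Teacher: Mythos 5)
Your proof is correct and takes essentially the same route as the paper: invariance identifies $\pi_{V^{\perp}} r_t$ with $\pi_{V^{\perp}}\mathcal{M}_{t_0+t}$, and the $V$-component $\pi_V r_t$ is recovered by applying $(\ell|_V)^{-1}$ to $\ell(r_t) - \ell\,\pi_{V^{\perp}}\mathcal{M}_{t_0+t}$. The only difference is that you spell out the upgrade from the ``for each fixed $t$ almost surely'' statement of Definition \ref{def-inv-foliation} to ``almost surely for all $t \geq 0$'' via path continuity and continuity of $\psi$, a point the paper's proof leaves implicit.
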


\begin{proof}
By condition (\ref{cond-foliation}) we obtain almost surely
\begin{align}\label{decomp-1}
r_t = \pi_{V^{\perp}} r_t + \pi_V r_t = \pi_{V^{\perp}}
\mathcal{M}_{t_0 + t} + \pi_V r_t, \quad t \geq 0.
\end{align}
Therefore we obtain almost surely
\begin{align}\label{decomp-2}
\pi_V r_t = r_t - \pi_{V^{\perp}} \mathcal{M}_{t_0 + t} = \ell^{-1}(
\ell(r_t) - \ell(\pi_{V^{\perp}} \mathcal{M}_{t_0 + t})), \quad t
\geq 0.
\end{align}
Inserting (\ref{decomp-2}) into (\ref{decomp-1}), we arrive at
(\ref{decomposition}).
\end{proof}

\begin{remark}
If the foliation $(\mathcal{M}_t)_{t \geq 0}$ is invariant for
(\ref{SPDE-manifold}), then for every continuous linear operator
$\ell \in L(H;\mathbb{R}^d)$ with $\ell(V) = \mathbb{R}^d$ the
decomposition (\ref{decomposition}) provides a realization of the
solution $(r_t)_{t \geq 0}$ by means of the finite dimensional
process $\ell(r)$.
\end{remark}

%\begin{proposition}\label{prop-transform}
%Suppose the foliation $(\mathcal{M}_t)_{t \geq 0}$ of submanifolds
%is invariant for (\ref{SPDE-manifold}) and let $\ell \in
%L(H;\mathbb{R}^d)$ be a continuous linear operator such that
%$\ell(V) = \mathbb{R}^d$. Then, for every $t_0 \in \mathbb{R}_+$ and
%$h \in \mathcal{M}_{t_0}$ we have almost surely
%\begin{align}\label{decomposition}
%r_t = \pi_{V^{\perp}} \psi(t_0 + t) + \ell^{-1}(\ell(r_t) - \ell
%\pi_{V^{\perp}} \psi(t_0 + t)), \quad t \geq 0
%\end{align}
%where $(r_t)_{t \geq 0}$ denotes the weak solution for
%(\ref{SPDE-manifold}) with $r_0 = h$, and (\ref{decomposition}) is
%the decomposition of $(r_t)_{t \geq 0}$ according to $V^{\perp}
%\oplus V$.
%\end{proposition}

%\begin{proof}
%By condition (\ref{cond-foliation}) we obtain almost surely
%\begin{align}\label{decomp-1}
%r_t = \pi_{V^{\perp}} r_t + \pi_V r_t = \pi_{V^{\perp}} \psi(t_0 +
%t) + \pi_V r_t, \quad t \geq 0.
%\end{align}
%Therefore we obtain almost surely
%\begin{align}\label{decomp-2}
%\pi_V r_t = r_t - \pi_{V^{\perp}} \psi(t_0 + t) = \ell^{-1}(
%\ell(r_t) - \ell(\pi_{V^{\perp}} \psi(t_0 + t))), \quad t \geq 0.
%\end{align}
%Inserting (\ref{decomp-2}) into (\ref{decomp-1}), we arrive at
%(\ref{decomposition}).
%\end{proof}

%\begin{remark}\label{remark-transform}
%Let $\{ e_1,\ldots,e_d \}$ be an orthonormal basis of $V$. Then we
%can write the decomposition (\ref{decomposition}) as
%\begin{align*}
%r_t = \psi(t) - \sum_{i=1}^d \langle \psi(t_0 + t),e_i \rangle e_i +
%\sum_{i=1}^d \langle r_t,e_i \rangle e_i, \quad t \geq 0.
%\end{align*}
%\end{remark}

We shall now approach our main result of this section, Theorem
\ref{thm-foliation} below, which provides consistency conditions for
invariance of the foliation $(\mathcal{M}_t)_{t \geq 0}$.

\begin{lemma}\label{lemma-conv-linear}
There exist $\zeta_1,\ldots,\zeta_d \in \mathcal{D}(A^*)$ and an
isomorphism $\phi : \mathbb{R}^d \rightarrow V$ such that
\begin{align}\label{conv-linear}
\phi(\langle \zeta,h \rangle) = h \quad \text{for all $h \in V$,}
\end{align}
where we use the notation $\langle \zeta,h \rangle := (\langle
\zeta_1,h \rangle,\ldots,\langle \zeta_d,h \rangle) \in
\mathbb{R}^d$.
\end{lemma}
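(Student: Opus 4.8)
The plan is to build the functionals $\zeta_1,\dots,\zeta_d$ by starting from an arbitrary basis of $V$, passing to the dual basis, and then perturbing the dual basis slightly so that the functionals land inside $\mathcal{D}(A^*)$ while still being a basis of $V^*$; the isomorphism $\phi$ is then forced on us. Concretely, fix a basis $v_1,\dots,v_d$ of $V$ and let $\lambda_1,\dots,\lambda_d \in H$ be vectors representing (via the Riesz isomorphism) the coordinate functionals, i.e. $\langle \lambda_i, v_j \rangle = \delta_{ij}$. If the $\lambda_i$ happened to lie in $\mathcal{D}(A^*)$ we would be done with $\phi(x) = \sum_{i=1}^d x_i v_i$, since then $\phi(\langle \lambda, h\rangle) = \sum_i \langle \lambda_i, h\rangle v_i = h$ for every $h \in V$ by expanding $h$ in the basis. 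The issue is that $\mathcal{D}(A^*)$ is merely dense, not all of $H$, so a priori $\lambda_i \notin \mathcal{D}(A^*)$.

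To fix this, I would use density of $\mathcal{D}(A^*)$ in $H$ to choose $\zeta_i \in \mathcal{D}(A^*)$ with $\|\zeta_i - \lambda_i\|$ as small as we like, for each $i = 1,\dots,d$. Then the $d \times d$ matrix $M_{ij} := \langle \zeta_i, v_j \rangle$ is close to the identity matrix (its entries differ from $\delta_{ij}$ by at most $\|\zeta_i - \lambda_i\|\,\|v_j\|$), so for a sufficiently fine approximation $M$ is invertible. Define $\phi : \mathbb{R}^d \to V$ by $\phi(x) := \sum_{j=1}^d (M^{-1} x)_j \, v_j$; this is linear and bijective onto $V$ since $M^{-1}$ is invertible and $(v_j)$ is a basis, hence an isomorphism. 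For $h \in V$, writing $h = \sum_j c_j v_j$ we get $\langle \zeta_i, h \rangle = \sum_j M_{ij} c_j = (Mc)_i$, so $\langle \zeta, h \rangle = Mc$ and therefore $\phi(\langle \zeta, h\rangle) = \sum_j (M^{-1} M c)_j v_j = \sum_j c_j v_j = h$, which is exactly \eqref{conv-linear}.

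The only point requiring the slightest care is the quantitative estimate showing that a near-identity matrix is invertible — but this is elementary (e.g. a Neumann series argument, or simply continuity of the determinant together with $\det I = 1 \neq 0$), so I do not expect any genuine obstacle here. The conceptual content of the lemma is entirely the observation that the finitely many coordinate functionals on the finite-dimensional space $V$ can be realized, up to an automorphism of $\mathbb{R}^d$ that we absorb into $\phi$, by elements of the dense subspace $\mathcal{D}(A^*)$; the role of $\mathcal{D}(A^*)$ later will be that $A^*\zeta_i$ is defined, allowing one to differentiate $t \mapsto \langle \zeta_i, r_t\rangle$ via the weak formulation of \eqref{SPDE-manifold}.
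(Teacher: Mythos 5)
Your proposal is correct and follows essentially the same route as the paper: approximate the dual (coordinate) functionals of a basis of $V$ by elements of the dense subspace $\mathcal{D}(A^*)$, observe that the resulting Gram-type matrix is a small perturbation of the identity and hence invertible, and absorb its inverse into the definition of $\phi$. The only cosmetic difference is that the paper works with an orthonormal basis and invokes Gershgorin's theorem for the invertibility, whereas you use an arbitrary basis with its dual and continuity of the determinant; this changes nothing of substance.
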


\begin{proof}
By the Gram-Schmidt method, there exists an orthonormal basis $\{
e_1,\ldots,e_d \}$ of $V$. Since $\mathcal{D}(A^*)$ is dense in $H$,
there exist $\zeta_1,\ldots,\zeta_d \in \mathcal{D}(A^*)$ with $\|
\zeta_i - e_i \| < 2^{-d}$ for $i = 1,\ldots,d$. Hence, we obtain
\begin{align*}
|\langle \zeta_i,e_j \rangle| \leq |\langle e_i,e_j \rangle| +
|\langle \zeta_i-e_i,e_j \rangle| < 2^{-d}
\end{align*}
for all $i,j = 1,\ldots,d$ with $i \neq j$ and
\begin{align*}
|\langle \zeta_i,e_i \rangle| \geq |\langle e_i,e_i \rangle| -
|\langle \zeta_i-e_i,e_i \rangle| > 1 - 2^{-d}
\end{align*}
for all $i = 1,\ldots,d$. Thus, we have
\begin{align*}
\sum_{j=1 \atop j \neq i}^d |\langle \zeta_i,e_j \rangle| < (d-1)
2^{-d} < (2^d - 1) 2^{-d} = 1 - 2^d < |\langle \zeta_i,e_i \rangle|
\end{align*}
for all $i = 1,\ldots,d$, and hence, due to the Theorem of
Gerschgorin, the $(d \times d)$-matrix
\begin{align*}
B := \left(
\begin{array}{ccc}
\langle \zeta_1,e_1 \rangle & \cdots & \langle \zeta_1,e_d \rangle
\\ \vdots & & \vdots
\\ \langle \zeta_d,e_1 \rangle & \cdots & \langle \zeta_d,e_d \rangle
\end{array}
\right)
\end{align*}
is invertible. Let $\psi : V \rightarrow \mathbb{R}^d$ be the
isomorphism
\begin{align*}
\psi(h) := (\langle e_1,h \rangle,\ldots,\langle e_d,h \rangle),
\quad h \in V.
\end{align*}
Then, the isomorphism $B \circ \psi : V \rightarrow \mathbb{R}^d$ has the
representation
\begin{align*}
(B \circ \psi)(h) = (\langle \zeta_1,h \rangle,\ldots,\langle \zeta_d,h
\rangle), \quad h \in V.
\end{align*}
Defining the isomorphism $\phi := (B \circ \psi)^{-1} : \mathbb{R}^d
\rightarrow V$ completes the proof.
\end{proof}

Now, let $\psi \in C^1(\mathbb{R}_+;H)$ be a parametrization of
$(\mathcal{M}_t)_{t \geq 0}$ and let $\phi : \mathbb{R}^d
\rightarrow V$ be an isomorphism as in Lemma
\ref{lemma-conv-linear}. We define $\tilde{\alpha},\tilde{\sigma} :
\mathbb{R}_+ \times \mathbb{R}^d \rightarrow \mathbb{R}^d$ as
\begin{align*}
\tilde{\alpha}(t,z) &:= \langle A^* \zeta, \psi(t) + \phi(z) \rangle
+ \langle \zeta, \alpha(\psi(t) + \phi(z)) - \psi'(t) \rangle,
\\ \tilde{\sigma}(t,z) &:= \langle
\zeta, \sigma(\psi(t) + \phi(z)) \rangle.
\end{align*}
By Assumption \ref{ass-SPDE} we have $\tilde{\alpha},\tilde{\sigma}
\in C^{0,1}(\mathbb{R}_+ \times \mathbb{R}^d;\mathbb{R}^d)$ and
there exists a constant $K > 0$ such that
\begin{align*}
\| \tilde{\alpha}(t,z_1) - \tilde{\alpha}(t,z_2) \|_{\mathbb{R}^d}
&\leq K \| z_1 - z_2 \|_{\mathbb{R}^d}
\\ \| \tilde{\sigma}(t,z_1) - \tilde{\sigma}(t,z_2) \|_{\mathbb{R}^d} &\leq K \| z_1 - z_2 \|_{\mathbb{R}^d}
\end{align*}
for all $t \in \mathbb{R}_+$ and all $z_1,z_2 \in \mathbb{R}^d$.
Thus, for each $t_0 \in \mathbb{R}_+$ and each $z_0 \in
\mathbb{R}^d$ there exists a unique strong solution for
\begin{align}\label{SDE-state}
\left\{
\begin{array}{rcl}
dZ_t & = & \tilde{\alpha}(t_0+t,Z_t)dt +
\tilde{\sigma}(t_0+t,Z_t)dW_t
\medskip
\\ Z_0 & = & z_0.
\end{array}
\right.
\end{align}
We define the vector field $\nu : \mathcal{D}(A) \rightarrow H$ as
\begin{align*}
\nu(h) := Ah + \alpha(h), \quad h \in \mathcal{D}(A).
\end{align*}

Here is our main result concerning invariance of the foliation
$(\mathcal{M}_t)_{t \geq 0}$ for the SPDE (\ref{SPDE-manifold}).

\begin{theorem}\label{thm-foliation}
The foliation $(\mathcal{M}_t)_{t \geq 0}$ is an invariant foliation
for (\ref{SPDE-manifold}) if and only if for all $t \geq 0$ we have
\begin{align}\label{domain-pre}
\mathcal{M}_t &\subset \mathcal{D}(A),
\\ \label{nu-pre} \nu ( h ) &\in T \mathcal{M}_t, \quad h \in
\mathcal{M}_t
\\ \label{sigma-pre} \sigma ( h ) &\in V, \quad h \in
\mathcal{M}_t.
\end{align}
If the previous conditions are satisfied, the map
\begin{align}\label{map-A}
\mathbb{R}_+ \rightarrow H, \quad t \mapsto A ( \pi_{V^{\perp}}
\mathcal{M}_t )
\end{align}
is continuous, and for every $t_0 \in \mathbb{R}_+$ and $h \in
\mathcal{M}_{t_0}$ the weak solution for (\ref{SPDE-manifold}) with
$r_0 = h$ is also a strong solution.
\end{theorem}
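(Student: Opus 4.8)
The plan is to prove the equivalence by exhibiting, in both directions, the identity $r_t = \psi(t_0+t) + \phi(Z_t)$ linking the weak solution $(r_t)_{t\ge 0}$ of (\ref{SPDE-manifold}) started at $h\in\mathcal{M}_{t_0}$ to the unique strong solution $(Z_t)_{t\ge 0}$ of the finite-dimensional SDE (\ref{SDE-state}). Throughout I would fix the parametrization with $\psi(t) = \pi_{V^{\perp}}\mathcal{M}_t\in V^{\perp}$, and repeatedly use the reformulation of (\ref{conv-linear}) that $\langle\zeta,\phi(z)\rangle = z$ for $z\in\mathbb{R}^d$ (equivalently, $v\mapsto\langle\zeta,v\rangle$ is the inverse of $\phi$ on $V$), together with the adjoint relation $\langle A^*\zeta_i,h\rangle = \langle\zeta_i,Ah\rangle$, valid for $h\in\mathcal{D}(A)$.

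For the implication ``$\Leftarrow$'', fix $t_0$ and $h\in\mathcal{M}_{t_0}$, put $z_0 := \langle\zeta,h-\psi(t_0)\rangle$, let $(Z_t)_{t\ge 0}$ be the strong solution of (\ref{SDE-state}) with $Z_0 = z_0$, and set $\hat r_t := \psi(t_0+t) + \phi(Z_t)$, which lies in $\mathcal{M}_{t_0+t}\subset\mathcal{D}(A)$ by (\ref{domain-pre}) and satisfies $\hat r_0 = h$. Applying the bounded operator $\phi$ to (\ref{SDE-state}) and computing the dynamics of $\hat r$ gives $d\hat r_t = [\psi'(t_0+t) + \phi(\tilde\alpha(t_0+t,Z_t))]dt + \phi(\tilde\sigma(t_0+t,Z_t))dW_t$; inserting $\hat r_t = \psi(t_0+t)+\phi(Z_t)$ and using the adjoint relation turns this into $\tilde\alpha(t_0+t,Z_t) = \langle\zeta,\nu(\hat r_t)-\psi'(t_0+t)\rangle$ and $\tilde\sigma(t_0+t,Z_t) = \langle\zeta,\sigma(\hat r_t)\rangle$. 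Since $\nu(\hat r_t)-\psi'(t_0+t)\in V$ by (\ref{nu-pre}) and $\sigma(\hat r_t)\in V$ by (\ref{sigma-pre}), applying $\phi$ and (\ref{conv-linear}) recovers $\nu(\hat r_t)-\psi'(t_0+t)$ and $\sigma(\hat r_t)$, so that $d\hat r_t = (A\hat r_t + \alpha(\hat r_t))dt + \sigma(\hat r_t)dW_t$; as $s\mapsto A\hat r_s = \psi'(t_0+s)+\phi(\tilde\alpha(t_0+s,Z_s))-\alpha(\hat r_s)$ is continuous, $\hat r$ is a strong, hence weak, solution. Uniqueness of weak solutions (Assumption \ref{ass-SPDE}) gives $r = \hat r$; thus $r_t\in\mathcal{M}_{t_0+t}$, which is the invariance, and simultaneously the strong-solution assertion. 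The continuity of (\ref{map-A}) then follows by taking $h = \psi(t)$: one has $\pi_{V^{\perp}}A\psi(t) = \pi_{V^{\perp}}(\psi'(t)-\alpha(\psi(t)))$ by (\ref{nu-pre}) and $\langle\zeta_i,\pi_V A\psi(t)\rangle = \langle A^*\zeta_i,\psi(t)\rangle - \langle\zeta_i,\pi_{V^{\perp}}A\psi(t)\rangle$, both continuous, and $v\mapsto\langle\zeta,v\rangle$ is an isomorphism onto $\mathbb{R}^d$.

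For the implication ``$\Rightarrow$'', assume the foliation is invariant and let $r$ be the weak solution with $r_0 = h\in\mathcal{M}_{t_0}$. Using (\ref{cond-foliation}) at rational times and the continuity of the paths of $\pi_{V^{\perp}}r$ and of $t\mapsto\pi_{V^{\perp}}\psi(t_0+t)$, I first upgrade invariance to: almost surely $r_t-\psi(t_0+t)\in V$ for all $t\ge 0$. Then $Z_t := \langle\zeta,r_t-\psi(t_0+t)\rangle$ satisfies $\phi(Z_t) = r_t-\psi(t_0+t)$, and testing the weak solution against $\zeta_i$ and subtracting the $C^1$ path $\langle\zeta_i,\psi(t_0+t)\rangle$ shows that $Z$ solves (\ref{SDE-state}) with $Z_0 = z_0$, hence is its unique solution and $r_t = \psi(t_0+t) + \phi(Z_t)$. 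Applying $\phi$ to (\ref{SDE-state}) yields the representation $r_t = h + \int_0^t[\psi'(t_0+s)+\phi(\tilde\alpha(t_0+s,Z_s))]ds + \int_0^t\phi(\tilde\sigma(t_0+s,Z_s))dW_s$, whose pairing with any $\xi\in\mathcal{D}(A^*)$ must agree with the weak solution identity. Uniqueness of the semimartingale decomposition gives, for $\mathbb{P}\otimes ds$-a.e.\ $(\omega,s)$, that $\langle\xi,\sigma(r_s)\rangle = \langle\xi,\phi(\tilde\sigma(t_0+s,Z_s))\rangle$ and $\langle A^*\xi,r_s\rangle = \langle\xi,\psi'(t_0+s)+\phi(\tilde\alpha(t_0+s,Z_s))-\alpha(r_s)\rangle$; choosing a countable subset of $\mathcal{D}(A^*)$ dense for the graph norm (that space is separable), amalgamating the exceptional sets, passing to graph-norm limits, and then using continuity in $s$, these hold for all $\xi\in\mathcal{D}(A^*)$ and all $s$, almost surely. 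Density of $\mathcal{D}(A^*)$ in $H$ gives $\sigma(r_s)\in V$, and the identity $A^{**}=A$ gives $r_s\in\mathcal{D}(A)$ with $\nu(r_s) = \psi'(t_0+s)+\phi(\tilde\alpha(t_0+s,Z_s))\in\psi'(t_0+s)+V$. Evaluating at $s=0$, where $r_0=h$ is deterministic, and letting $t_0$ and $h\in\mathcal{M}_{t_0}$ vary, yields exactly (\ref{domain-pre})--(\ref{sigma-pre}).

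I expect the main obstacle to be the ``$\Rightarrow$'' direction and, within it, the bookkeeping of exceptional sets: the uniqueness-of-decomposition step produces a null set for each test vector $\xi$, and these must be combined into a single null set and then eliminated by a continuity argument in $s$, so that the three geometric conditions can be read off at the single time $s=0$; this is precisely where separability of $\mathcal{D}(A^*)$ in the graph norm and the identity $A^{**}=A$ enter. By contrast, the ``$\Leftarrow$'' direction, the strong-solution property, and the continuity of (\ref{map-A}) are direct computations once the ansatz $r_t=\psi(t_0+t)+\phi(Z_t)$ and Lemma \ref{lemma-conv-linear} are in hand.
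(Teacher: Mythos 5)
Your proposal is correct and follows essentially the same route as the paper: both directions hinge on the identification $r_t=\psi(t_0+t)+\phi(Z_t)$ with the finite-dimensional SDE (\ref{SDE-state}), with the ``$\Leftarrow$'' direction verified by It\^o's formula plus uniqueness of solutions and the ``$\Rightarrow$'' direction by comparing the two semimartingale representations tested against $\xi\in\mathcal{D}(A^*)$ and reading off the conditions at $s=0$. Your treatment is somewhat more explicit than the paper's about the null-set bookkeeping, the graph-norm separability of $\mathcal{D}(A^*)$, and the use of $A^{**}=A$, but these are refinements of the same argument rather than a different one.
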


\begin{proof}
"$\Rightarrow$": Let $t_0 \in \mathbb{R}_+$ and $h_0 \in V$ be
arbitrary. Then we have $h := \psi(t_0) + h_0 \in
\mathcal{M}_{t_0}$. Let $(r_t)_{t \geq 0}$ be the weak solution for
(\ref{SPDE-manifold}) with $r_0 = h$ and set $z_0 := \langle
\zeta,h_0 \rangle$. Since $\zeta_1,\ldots,\zeta_d \in
\mathcal{D}(A^*)$ and $(\mathcal{M}_t)_{t \geq 0}$ is an invariant
foliation for (\ref{SPDE-manifold}), we obtain, by using
(\ref{conv-linear}),
\begin{align*}
\langle \zeta, r_t - \psi(t_0 + t) \rangle &= \langle \zeta, h -
\psi(t_0) \rangle + \int_0^t (\langle A^* \zeta,r_s \rangle +
\langle \zeta,\alpha(r_s) - \psi'(t_0 + s) \rangle) ds
\\ &\quad + \int_0^t \langle
\zeta,\sigma(r_{s}) \rangle dW_s
\\ &= \langle \zeta, h_0 \rangle + \int_0^t
\tilde{\alpha}(t_0+s,\langle \zeta, r_s - \psi(t_0 + s) \rangle) ds
\\ &\quad + \int_0^t \tilde{\sigma}(t_0+s,\langle \zeta, r_{s} - \psi(t_0 +
s) \rangle) dW_s.
\end{align*}
This identity shows that almost surely
\begin{align*}
Z_t = \langle \zeta,r_t - \psi(t_0+t) \rangle, \quad t \geq 0
\end{align*}
where $(Z_t)_{t \geq 0}$ denotes the strong solution for
(\ref{SDE-state}) with $Z_0 = z_0$. By (\ref{conv-linear}), we have
almost surely
\begin{align*}
\phi(Z_t) = r_t - \psi(t_0 + t), \quad t \geq 0.
\end{align*}
Let $\xi \in \mathcal{D}(A^*)$ be arbitrary. We obtain, by It\^o's
formula and applying the linear functional $\langle \xi,\cdot
\rangle$ afterwards,
\begin{equation}\label{combine-1}
\begin{aligned}
\langle \xi,r_t - \psi(t_0 + t) \rangle = \langle \xi,\phi(Z_t)
\rangle &= \langle \xi,h_0 \rangle + \int_0^t \langle
\xi,\phi(\tilde{\alpha}(t_0+s,Z_s)) \rangle ds
\\ &\quad + \int_0^t \langle \xi,
\phi(\tilde{\sigma}(t_0+s,Z_{s})) \rangle dW_s.
\end{aligned}
\end{equation}
Since $(r_t)_{t \geq 0}$ is a weak solution for
(\ref{SPDE-manifold}) with $r_0 = h$, we have
\begin{equation}\label{combine-2}
\begin{aligned}
\langle \xi, r_t - \psi(t_0+t) \rangle &= \langle \xi, h -
\psi(t_0) \rangle + \int_0^t (\langle A^* \xi,r_s \rangle + \langle
\xi, \alpha(r_s) - \psi'(t_0+s) \rangle) ds
\\ &\quad + \int_0^t \langle
\xi,\sigma(r_{s}) \rangle dW_s.
\end{aligned}
\end{equation}
Combining (\ref{combine-1}) and (\ref{combine-2}) we get
\begin{equation}\label{combined}
\begin{aligned}
0 &= \int_0^t (\langle A^* \xi,r_s \rangle + \langle \xi,
\alpha(r_s) - \psi'(t_0+s) - \phi(\tilde{\alpha}(t_0+s,Z_s))
\rangle) ds
\\ &\quad + \int_0^t \langle
\xi,\sigma(r_{s}) - \phi(\tilde{\sigma}(t_0+s,Z_{s})) \rangle dW_s.
\end{aligned}
\end{equation}
Therefore, all integrands in (\ref{combined}) vanish and, since $\xi
\in \mathcal{D}(A^*)$ was arbitrary, setting $s=0$ yields $\psi(t_0)
+ h_0 \in \mathcal{D}(A)$, proving (\ref{domain-pre}) and the
identities
\begin{align}\label{A-cont}
\nu(\psi(t_0) + h_0) &= \psi'(t_0) + \phi(\tilde{\alpha}(t_0,z_0))
\in T \mathcal{M}_{t_0},
\\ \sigma(\psi(t_0) + h_0) &= \phi(\tilde{\sigma}(t_0,z_0)) \in
V
\end{align}
which show (\ref{nu-pre}), (\ref{sigma-pre}). Furthermore, identity
(\ref{A-cont}) proves the continuity of the map defined in
(\ref{map-A}).

"$\Leftarrow$": Let $t_0 \in \mathbb{R}_+$ and $h \in
\mathcal{M}_{t_0}$ be arbitrary. There exists a unique $z_0 \in
\mathbb{R}^d$ such that $h = \psi(t_0) + \phi(z_0)$. Let $(Z_t)_{t
\geq 0}$ be the strong solution for (\ref{SDE-state}) with $Z_0 =
z_0$. It\^o's formula yields, by using
(\ref{domain-pre})--(\ref{sigma-pre}) and (\ref{conv-linear}),
\begin{align*}
&\psi(t_0 + t) + \phi(Z_t) = \psi(t_0) + \phi(z_0) + \int_0^t (
\psi'(t_0 + s) + \phi(\tilde{\alpha}(t_0+s,Z_s)) )ds
\\ &\quad + \int_0^t
\phi(\tilde{\sigma}(t_0+s,Z_{s}))dW_s
\\ &= h + \int_0^t (\psi'(t_0+s) + \phi(\langle \zeta, \nu(\psi(t_0+s) + \phi(Z_s)) - \psi'(t_0+s) \rangle))ds
\\ &\quad + \int_0^t \phi(\langle \zeta, \sigma(\psi(t_0+s) + \phi(Z_{s})) \rangle)dW_s
\\ &= h + \int_0^t (A(\psi(t_0+s) + \phi(Z_s)) + \alpha(\psi(t_0+s) +
\phi(Z_s)))ds
\\ &\quad + \int_0^t \sigma(\psi(t_0+s) +
\phi(Z_{s}))dW_s, \quad t \geq 0.
\end{align*}
By the uniqueness of solutions for (\ref{SPDE-manifold}) we obtain
almost surely
\begin{align*}
r_t = \psi(t_0 + t) + \phi(Z_t) \in \mathcal{M}_{t_0 + t}, \quad t
\geq 0
\end{align*}
where $(r_t)_{t \geq 0}$ denotes the weak solution for
(\ref{SPDE-manifold}) with $r_0 = h$, whence $(\mathcal{M}_t)_{t
\geq 0}$ is an invariant foliation, and we get that $(r_t)_{t \geq
0}$ is also a strong solution.
\end{proof}

\begin{remark}
Note that (\ref{domain-pre})--(\ref{sigma-pre}) are consistency
conditions on the tangent spaces (for related results see, e.g.,
\cite{Filipovic-inv}). Since the foliation $(\mathcal{M}_t)_{t \geq
0}$ consists of affine manifolds, we do not need a Stratonovich
correction term for the drift.
\end{remark}

Now, we express the consistency conditions from Theorem
\ref{thm-foliation} by means of a coordinate system. Let $\psi \in
C^1(\mathbb{R}_+;H)$ be a parametrization of $(\mathcal{M}_t)_{t
\geq 0}$ and let $\{ \lambda_1,\ldots,\lambda_d \}$ be a basis of
$V$.

\begin{corollary}\label{cor-foli-1}
The following statements are equivalent:
\begin{enumerate}
\item $(\mathcal{M}_t)_{t \geq 0}$ is an invariant foliation for
(\ref{SPDE-manifold}).

\item We have
\begin{align}\label{domain-psi-SPDE}
\psi(\mathbb{R}_+) &\subset \mathcal{D}(A),
\\ \label{domain-lambda-SPDE} \lambda_1,\ldots,\lambda_d &\in
\mathcal{D}(A)
\end{align}
and there exist $\mu,\gamma : \mathbb{R}_+ \times \mathbb{R}^d
\rightarrow \mathbb{R}^d$ such that
\begin{align}\label{nu-tangent-foli-1}
\nu \bigg( \psi(t) + \sum_{i=1}^d y_i \lambda_i \bigg) &= \psi'(t) +
\sum_{i=1}^d \mu_i(t,y) \lambda_i, \quad (t,y) \in \mathbb{R}_+
\times \mathbb{R}^d
\\ \label{sigma-tangent-foli-1} \sigma \bigg( \psi(t) + \sum_{i=1}^d y_i \lambda_i \bigg) &= \sum_{i=1}^d \gamma_i(t,y) \lambda_i, \quad
(t,y) \in \mathbb{R}_+ \times \mathbb{R}^d.
\end{align}
\end{enumerate}
If the previous conditions are satisfied, $\mu$ and $\gamma$ are
uniquely determined, we have $\mu,\gamma \in C^{0,1}(\mathbb{R}_+
\times \mathbb{R}^d;\mathbb{R}^d)$, there exists a constant $K > 0$
such that
\begin{align}\label{Lip-mu}
\| \mu(t,y_1) - \mu(t,y_2) \|_{\mathbb{R}^d} &\leq K \| y_1 - y_2
\|_{\mathbb{R}^d}
\\ \label{Lip-gamma} \| \gamma(t,y_1) - \gamma(t,y_2) \|_{\mathbb{R}^d} &\leq K \| y_1 - y_2 \|_{\mathbb{R}^d}
\end{align}
for all $t \in \mathbb{R}_+$ and $y_1,y_2 \in \mathbb{R}^d$, and for
every $t_0 \in \mathbb{R}_+$ and $h \in \mathcal{M}_{t_0}$ the weak
solution for (\ref{SPDE-manifold}) with $r_0 = h$ is also a strong
solution.
\end{corollary}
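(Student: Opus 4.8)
The plan is to read off everything from Theorem~\ref{thm-foliation} by expressing its intrinsic consistency conditions (\ref{domain-pre})--(\ref{sigma-pre}) in the coordinate system furnished by $\psi$ and the basis $\{\lambda_1,\dots,\lambda_d\}$ of $V$. Since $\mathcal{D}(A)$ is a linear subspace and $\mathcal{M}_t = \psi(t) + V$, the condition $\mathcal{M}_t \subset \mathcal{D}(A)$ for all $t$ is equivalent to $\psi(\mathbb{R}_+) \subset \mathcal{D}(A)$ together with $V \subset \mathcal{D}(A)$, i.e.\ $\lambda_1,\dots,\lambda_d \in \mathcal{D}(A)$, which identifies (\ref{domain-pre}) with (\ref{domain-psi-SPDE})--(\ref{domain-lambda-SPDE}). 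Writing an arbitrary $h \in \mathcal{M}_t$ as $h = \psi(t) + \sum_i y_i\lambda_i$ and recalling $T\mathcal{M}_t = \psi'(t) + V$, the requirement $\nu(h) \in T\mathcal{M}_t$ for all such $h$ says precisely that for every $(t,y)$ there is a tuple $\mu(t,y) \in \mathbb{R}^d$ with $\nu(\psi(t) + \sum_i y_i\lambda_i) - \psi'(t) = \sum_i \mu_i(t,y)\lambda_i$, and since $\{\lambda_i\}$ is a basis this tuple is unique; this is (\ref{nu-tangent-foli-1}), and the analogous translation of $\sigma(h) \in V$ gives (\ref{sigma-tangent-foli-1}) with a uniquely determined $\gamma$. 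Hence statement (2) is a verbatim reformulation of (\ref{domain-pre})--(\ref{sigma-pre}), so the equivalence (1)$\Leftrightarrow$(2), together with the uniqueness of $\mu$ and $\gamma$ and the assertion that the weak solution is also a strong solution, follows at once from Theorem~\ref{thm-foliation}.

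For the regularity of $\mu$ and $\gamma$ I would bring in the bounded operator $\ell := T\circ\pi_V \in L(H;\mathbb{R}^d)$, where $T : V \to \mathbb{R}^d$ is the isomorphism $\sum_i c_i\lambda_i \mapsto (c_1,\dots,c_d)$, so that $\ell$ returns the basis coordinates of any element of $V$ and therefore
\begin{align*}
\mu(t,y) &= \ell\big( A\psi(t) + \sum_i y_i A\lambda_i + \alpha(\psi(t) + \sum_i y_i\lambda_i) - \psi'(t) \big), \\
\gamma(t,y) &= \ell\big( \sigma(\psi(t) + \sum_i y_i\lambda_i) \big).
\end{align*}
The map $(t,y) \mapsto \psi(t) + \sum_i y_i\lambda_i$ is $C^1$ into $H$, so composing it with $\alpha,\sigma \in C^1(H)$ and then with the bounded $\ell$ shows that the $\alpha$- and $\sigma$-contributions are $C^1$, hence in $C^{0,1}$; the term $\sum_i y_i A\lambda_i$ is linear in $y$ and jointly continuous, and $\psi'$ is continuous. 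This already yields $\gamma \in C^{0,1}(\mathbb{R}_+\times\mathbb{R}^d;\mathbb{R}^d)$, and it yields $\mu \in C^{0,1}$ as soon as we know that $t \mapsto A\psi(t)$ is continuous, which is the only delicate point.

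I expect that continuity to be the sole real obstacle, and I would obtain it by comparing $\psi$ with the distinguished parametrization $\psi_0(t) := \pi_{V^\perp}\mathcal{M}_t$: by Remark~\ref{remark-para} the difference $v := \psi - \psi_0$ takes values in $V$, and $v \in C^1(\mathbb{R}_+;V)$ since both $\psi$ and $\psi_0$ are $C^1$ parametrizations; since $V \subset \mathcal{D}(A)$ is finite-dimensional, $A|_V$ is a bounded operator, whence $t \mapsto Av(t)$ is continuous, while $t \mapsto A\psi_0(t) = A(\pi_{V^\perp}\mathcal{M}_t)$ is continuous by Theorem~\ref{thm-foliation}, so that $t \mapsto A\psi(t) = A\psi_0(t) + Av(t)$ is continuous and $\mu \in C^{0,1}$ follows. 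Finally, the Lipschitz estimates (\ref{Lip-mu})--(\ref{Lip-gamma}) come out of the displayed formulas by a direct computation: on forming $\mu(t,y_1) - \mu(t,y_2)$ the $A\psi(t)$ and $\psi'(t)$ terms cancel, the difference of the linear terms is dominated by $\big(\sum_i \|A\lambda_i\|\big)$ times a norm-equivalence constant times $\|y_1 - y_2\|_{\mathbb{R}^d}$, and the remaining difference $\alpha(\psi(t)+\sum_i y_{1,i}\lambda_i) - \alpha(\psi(t)+\sum_i y_{2,i}\lambda_i)$ is controlled via (\ref{Lipschitz-alpha-SPDE}); composing with the bounded $\ell$ produces a constant $K$ independent of $t$, and the same argument using (\ref{Lipschitz-sigma-SPDE}) handles $\gamma$.
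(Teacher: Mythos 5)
Your proof is correct and follows essentially the same route as the paper: the equivalence, uniqueness of $\mu,\gamma$, and the strong-solution claim are read off from Theorem~\ref{thm-foliation}, the coordinate maps are recovered by applying the (bounded) coordinate isomorphism of $V$, and the regularity of $\mu$ hinges on the continuity of the map (\ref{map-A}) established in that theorem. You merely spell out in more detail the passage from $t \mapsto A(\pi_{V^{\perp}}\mathcal{M}_t)$ to $t \mapsto A\psi(t)$ for a general parametrization (via Remark~\ref{remark-para} and boundedness of $A|_V$), a step the paper leaves implicit.
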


\begin{proof}
The asserted equivalence follows from Theorem \ref{thm-foliation}.
By the linear independence of $\lambda_1,\ldots,\lambda_d$, the
mappings $\mu$ and $\gamma$ are uniquely determined. Denoting by
$\pi : \mathbb{R}^d \rightarrow V$ the isomorphism $\pi(y) :=
\sum_{i=1}^d y_i \lambda_i$, we can express them as
\begin{align*}
\mu(t,y) &= \pi^{-1} \bigg( \nu \bigg( \psi(t) + \sum_{i=1}^d y_i
\lambda_i \bigg) - \psi'(t) \bigg),
\\ \gamma(t,y) &= \pi^{-1} \bigg( \sigma \bigg( \psi(t) + \sum_{i=1}^d y_i
\lambda_i \bigg) \bigg).
\end{align*}
Since the map defined in (\ref{map-A}) is continuous by Theorem
\ref{thm-foliation}, we have $\mu,\gamma \in C^{0,1}(\mathbb{R}_+
\times \mathbb{R}^d;\mathbb{R}^d)$ and (\ref{Lip-mu}),
(\ref{Lip-gamma}) by virtue of Assumption \ref{ass-SPDE}.
\end{proof}

Suppose the foliation $(\mathcal{M}_t)_{t \geq 0}$ is invariant for
(\ref{SPDE-manifold}). We shall now identify the underlying
coordinate process $Y$. Let $t_0 \in \mathbb{R}_+$ and $h \in
\mathcal{M}_{t_0}$ be arbitrary. There exists a unique $y \in
\mathbb{R}^d$ such that $h = \psi(t_0) + \sum_{i=1}^d y_i
\lambda_i$. Taking into account (\ref{Lip-mu}), (\ref{Lip-gamma}),
we let $(Y_t)_{t \geq 0}$ be the strong solution for
\begin{align}\label{coord-proc}
\left\{
\begin{array}{rcl}
dY_t & = & \mu(t_0+t,Y_t)dt + \gamma(t_0+t,Y_{t})dW_t
\medskip
\\ Y_0 & = & y,
\end{array}
\right.
\end{align}
where $\mu,\gamma : \mathbb{R}_+ \times \mathbb{R}^d \rightarrow
\mathbb{R}^d$ are given by (\ref{nu-tangent-foli-1}),
(\ref{sigma-tangent-foli-1}). By It\^o's formula, the process
\begin{align}\label{strong-sol-real}
r_t = \psi(t_0 + t) + \sum_{i=1}^d Y_t^i \lambda_i, \quad t \geq 0
\end{align}
is the strong solution for (\ref{SPDE-manifold}) with $r_0 = h$.

\begin{remark}\label{remark-economic}
If we think of interest rate models, the state process $Y$ has no
direct economic interpretation. Proposition \ref{prop-transform}
shows that for any continuous linear operator $\ell \in L(H;\mathbb{R}^d)$ 
with $\ell(V) = \mathbb{R}^d$ we can choose $\ell(r)$
as state process. We may think of $\ell_i(h) = \frac{1}{x_i}
\int_0^{x_i} h(\eta) d\eta$ (benchmark yields) or $\ell_i(h) =
h(x_i)$ (benchmark forward rates). We refer to \cite[Sec. 7]{Bj_La},
\cite[Prop. 5.1]{Bj_Go}, \cite[Thm. 3.3]{Bj_Sv}, \cite[Prop.
2]{Kwon_2003}, \cite[Sec. 5]{Duffie-Kan} for related results.
\end{remark}

\section{Affine realizations for general stochastic partial differential equations}\label{sec-real-general}

The results of the previous section lead to the following definition
of an affine realization.

\begin{definition}
Let $V \subset H$ be a finite dimensional linear subspace. The SPDE
(\ref{SPDE-manifold}) has an {\em affine realization generated by
$V$} if for each $h_0 \in \mathcal{D}(A)$ there exists a foliation
$(\mathcal{M}_t^{h_0})_{t \geq 0}$ generated by $V$ with $h_0 \in
\mathcal{M}_0^{h_0}$, which is invariant for (\ref{SPDE-manifold}).
\end{definition}

We call $d := \dim V$ the {\em dimension} of the affine realization.

\begin{lemma}\label{lemma-ddv}
Let $d \in \mathbb{N}$ and $\lambda_1,\ldots,\lambda_d \in H$ be
linearly independent. Suppose the SPDE (\ref{SPDE-manifold}) has a
$d$-dimensional affine realization generated by $V = \langle
\lambda_1,\ldots,\lambda_d \rangle$. Then, there exist
$\Phi_1,\ldots,\Phi_d \in C^1(H;\mathbb{R})$ such that
\begin{align}\label{sigma-ddv}
\sigma(h) = \sum_{i=1}^d \Phi_i(h) \lambda_i, \quad h \in H.
\end{align}
\end{lemma}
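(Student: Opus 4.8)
The plan is to exploit the definition of an affine realization: for each starting point $h_0 \in \mathcal{D}(A)$ there is an invariant foliation $(\mathcal{M}_t^{h_0})_{t \geq 0}$ generated by $V = \langle \lambda_1,\ldots,\lambda_d\rangle$ with $h_0 \in \mathcal{M}_0^{h_0}$. Applying Theorem \ref{thm-foliation} (specifically condition \eqref{sigma-pre} at $t = 0$) to this foliation yields $\sigma(h_0) \in V$. Since $h_0 \in \mathcal{D}(A)$ was arbitrary, we conclude $\sigma(h) \in V$ for all $h \in \mathcal{D}(A)$, and then by density of $\mathcal{D}(A)$ in $H$ together with continuity of $\sigma$ (Assumption \ref{ass-SPDE}) and closedness of the finite dimensional subspace $V$, we get $\sigma(h) \in V$ for \emph{all} $h \in H$. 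This already gives the existence of \emph{some} coefficient functions $\Phi_i : H \to \mathbb{R}$ with $\sigma(h) = \sum_{i=1}^d \Phi_i(h)\lambda_i$, uniquely determined by linear independence of the $\lambda_i$.

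The remaining work is to upgrade these coordinate functions to class $C^1$. First I would introduce the coordinate isomorphism $\pi : \mathbb{R}^d \to V$, $\pi(y) := \sum_{i=1}^d y_i\lambda_i$, so that $\Phi(h) = (\Phi_1(h),\ldots,\Phi_d(h)) = \pi^{-1}(\sigma(h))$. Now $\pi^{-1} : V \to \mathbb{R}^d$ is linear and continuous on the finite dimensional space $V$. To see it extends to a bounded linear functional interpretation on all of $H$, pick a basis $\{\zeta_1,\ldots,\zeta_d\}$ of $H$-vectors as in Lemma \ref{lemma-conv-linear} (or simply an orthonormal basis of $V$): there is a continuous linear operator $\ell \in L(H;\mathbb{R}^d)$ such that $\ell|_V = \pi^{-1}$, e.g.\ $\ell(h) := B^{-1}(\langle e_1, h\rangle, \ldots, \langle e_d, h\rangle)$ in the notation of that lemma's proof, or more directly any bounded left inverse of $\pi$ viewed as a map into $H$. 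Then for $h \in H$ we have $\Phi(h) = \ell(\sigma(h))$, a composition of the $C^1$ map $\sigma$ (Assumption \ref{ass-SPDE}) with the bounded linear, hence $C^\infty$, map $\ell$. Therefore $\Phi \in C^1(H;\mathbb{R}^d)$, i.e.\ each $\Phi_i \in C^1(H;\mathbb{R})$, which is exactly \eqref{sigma-ddv}.

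I do not anticipate a serious obstacle here; the lemma is essentially an immediate corollary of Theorem \ref{thm-foliation}. The only point requiring a little care is the passage from $\mathcal{D}(A)$ to all of $H$ in the conclusion $\sigma(h) \in V$: this uses that $\mathcal{D}(A)$ is dense in $H$, that $\sigma$ is continuous, and that $V$, being finite dimensional, is closed. A second minor point is exhibiting the bounded extension $\ell$ of $\pi^{-1}$ so that the $C^1$-regularity of the coordinate functions follows from that of $\sigma$; this is routine finite dimensional linear algebra combined with density, along the exact lines of Lemma \ref{lemma-conv-linear}.
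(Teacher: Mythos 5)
Your proof is correct and follows essentially the same route as the paper: apply condition (\ref{sigma-pre}) of Theorem \ref{thm-foliation} to get $\sigma(h)\in V$ on $\mathcal{D}(A)$, extend to all of $H$ by density, continuity of $\sigma$, and closedness of $V$, and then read off the coordinate functions. Your explicit construction of the bounded linear left inverse $\ell$ to justify $\Phi_i\in C^1(H;\mathbb{R})$ is a slightly more detailed version of the step the paper dispatches with the single remark that $\sigma\in C^1(H)$.
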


\begin{proof}
Relation (\ref{sigma-pre}) from Theorem \ref{thm-foliation} yields
$\sigma(h) \in V$ for all $h \in \mathcal{D}(A)$. Since
$\mathcal{D}(A)$ is dense in $H$ and $V$ is closed, we obtain
$\sigma(h) \in V$ for all $h \in H$. Hence, there exist
$\Phi_1,\ldots,\Phi_d : H \rightarrow \mathbb{R}$ such that
(\ref{sigma-ddv}) is satisfied. Since $\sigma \in C^1(H)$, we have
$\Phi_1,\ldots,\Phi_d \in C^1(H;\mathbb{R})$.
\end{proof}

Suppose the SPDE (\ref{SPDE-manifold}) has an affine realization
generated by a finite dimensional subspace $V \subset H$. Then, for
each $h_0 \in \mathcal{D}(A)$ the foliation
$(\mathcal{M}_t^{h_0})_{t \geq 0}$ is uniquely determined by Lemma
\ref{lemma-unique-foli}. We define the {\em singular set} $\Sigma$
as
\begin{equation}\label{def-singular-set}
\begin{aligned}
\Sigma &= \{ h_0 \in \mathcal{D}(A) : \mathcal{M}_0^{h_0} =
\mathcal{M}_t^{h_0} \text{ for all $t \geq 0$} \}
\\ &= \{ h_0 \in \mathcal{D}(A) : h_0 + V \text{ is an invariant manifold}
\}.
\end{aligned}
\end{equation}
From a geometric point of view, the singular set consists of all starting points $h_0 \in \mathcal{D}(A)$, for which
the corresponding foliation $(\mathcal{M}_t^{h_0})_{t \geq 0}$ only consists of a single
leaf, that is, the solution process even stays on the $d$-dimensional affine space $h_0 + V$. For $h_0 \in \Sigma$ the mappings $\mu,\gamma : \mathbb{R}^d \rightarrow \mathbb{R}^d$ in (\ref{coord-proc}) do not depend on the time $t$, whence the coordinate process $Y$ is time-homogeneous, and the parametrization $\psi$ in the affine realization (\ref{strong-sol-real}) may be chosen as $\psi \equiv h_0$.

A consequence of the definition of the singular set in (\ref{def-singular-set}) is the identity
\begin{align}\label{Sigma-plus-V}
\Sigma + V = \Sigma.
\end{align}
In particular, $\Sigma$ is an invariant set for
(\ref{SPDE-manifold}).

\begin{proposition}\label{prop-singular-set}
Suppose the SPDE (\ref{SPDE-manifold}) has an affine realization
generated by $V$. Then, the singular set $\Sigma$ is given by
\begin{align}\label{singular-set-general}
\Sigma = \{ h_0 \in \mathcal{D} ( A ) : \nu(h_0) \in V \},
\end{align}
for each $h_0 \in \mathcal{D}(A)$ and $t \in \mathbb{R}_+$ we have either $\mathcal{M}_t^{h_0} \cap \Sigma = \emptyset$ or $\mathcal{M}_t^{h_0} \subset \Sigma$, and for every $t_0 \in \mathbb{R}_+$ with $\mathcal{M}_{t_0}^{h_0} \subset \Sigma$ we have $\mathcal{M}_t^{h_0} = \mathcal{M}_{t_0}^{h_0}$ for all $t \geq t_0$.
%the weak solution $(r_t)_{t \geq
%0}$ for (\ref{SPDE-manifold}) with $r_0 = h_0$ is also a strong
%solution, and we have
%\begin{align}\label{dich-sets-1}
%\mathbb{P}(r_t \notin \Sigma) &= 1, \quad t \in [0,t_0)
%\\ \label{dich-sets-2} \mathbb{P}(r_t \in \Sigma) &= 1, \quad t \in [t_0,\infty)
%\end{align}
%where we have set
%\begin{align*}
%t_0 := \inf \{ t \geq 0 : \mathcal{M}_t^{h_0} \subset \Sigma \} \in
%[0,\infty].
%\end{align*}
\end{proposition}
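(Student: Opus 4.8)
The plan is to prove the three assertions of Proposition~\ref{prop-singular-set} in order, each time exploiting Theorem~\ref{thm-foliation} together with the uniqueness of the foliations guaranteed by Lemma~\ref{lemma-unique-foli}.

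First I would establish the characterization \eqref{singular-set-general}. For the inclusion ``$\subset$'', suppose $h_0 \in \Sigma$, so that $h_0 + V$ is an invariant manifold, i.e.\ the constant foliation $\mathcal{M}_t \equiv h_0 + V$ is invariant for \eqref{SPDE-manifold}. This foliation has parametrization $\psi \equiv h_0 \in V^\perp$ (after subtracting a suitable element of $V$, using Remark~\ref{remark-para}), hence $\psi' \equiv 0$ and $T\mathcal{M}_t = V$ for all $t$. Applying \eqref{nu-pre} from Theorem~\ref{thm-foliation} at $t=0$ gives $\nu(h_0) \in T\mathcal{M}_0 = V$, and \eqref{domain-pre} gives $h_0 \in \mathcal{D}(A)$, so $h_0$ lies in the right-hand set of \eqref{singular-set-general}. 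For the converse ``$\supset$'', suppose $h_0 \in \mathcal{D}(A)$ with $\nu(h_0) \in V$. By hypothesis the SPDE has an affine realization generated by $V$, so there is a foliation $(\mathcal{M}_t^{h_0})_{t\geq0}$ generated by $V$ with $h_0 \in \mathcal{M}_0^{h_0}$ which is invariant; I want to show $h_0 + V$ is itself invariant, which by Lemma~\ref{lemma-unique-foli} will force $\mathcal{M}_t^{h_0} = h_0 + V$ for all $t$, i.e.\ $h_0 \in \Sigma$. To see $h_0 + V$ is invariant I would verify conditions \eqref{domain-pre}--\eqref{sigma-pre} for the constant foliation $\mathcal{M}_t \equiv h_0 + V$: condition \eqref{sigma-pre} holds because $\sigma(h) \in V$ for all $h \in H$ by Lemma~\ref{lemma-ddv} (applied with any basis of $V$); condition \eqref{domain-pre} needs $h_0 + V \subset \mathcal{D}(A)$, which follows since $h_0 \in \mathcal{D}(A)$ and $V \subset \mathcal{D}(A)$ by \eqref{domain-lambda-SPDE} of Corollary~\ref{cor-foli-1} applied to $(\mathcal{M}_t^{h_0})_{t\geq0}$; and condition \eqref{nu-pre} with $\psi'\equiv 0$ amounts to $\nu(h) \in V$ for every $h \in h_0 + V$. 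This last point is the one I expect to require the most care: we know $\nu(h_0) \in V$ by assumption, but we need $\nu(h_0 + v) \in V$ for all $v \in V$. Here I would use Corollary~\ref{cor-foli-1} applied to the existing invariant foliation $(\mathcal{M}_t^{h_0})$ with parametrization $\psi$ satisfying $\psi(0) = \pi_{V^\perp}(h_0)$: relation \eqref{nu-tangent-foli-1} gives $\nu(\psi(0) + \sum y_i\lambda_i) = \psi'(0) + \sum \mu_i(0,y)\lambda_i$, so $\nu$ maps all of $\mathcal{M}_0^{h_0} = \psi(0)+V = h_0 + V$ into $\psi'(0) + V$; evaluating at $h_0$ and using $\nu(h_0)\in V$ then forces $\psi'(0) \in V$, hence $\nu(h) \in V$ for all $h \in h_0+V$. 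This closes the verification and yields $h_0 \in \Sigma$.

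Second, for the dichotomy statement, fix $h_0 \in \mathcal{D}(A)$ and $t \in \mathbb{R}_+$, and suppose $\mathcal{M}_t^{h_0} \cap \Sigma \neq \emptyset$; pick $g \in \mathcal{M}_t^{h_0} \cap \Sigma$. By \eqref{Sigma-plus-V} we have $g + V \subset \Sigma$, and since $\mathcal{M}_t^{h_0}$ is an affine subspace generated by $V$ containing $g$, we have $\mathcal{M}_t^{h_0} = g + V \subset \Sigma$. That is all the dichotomy requires.

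Third, suppose $\mathcal{M}_{t_0}^{h_0} \subset \Sigma$ for some $t_0$. Pick any $g \in \mathcal{M}_{t_0}^{h_0}$; then $g \in \Sigma$, so $g + V = \mathcal{M}_{t_0}^{h_0}$ is an invariant manifold, meaning the constant foliation $\mathcal{M}_s \equiv \mathcal{M}_{t_0}^{h_0}$ is invariant for \eqref{SPDE-manifold}. On the other hand, the time-shifted foliation $s \mapsto \mathcal{M}_{t_0+s}^{h_0}$ is also invariant for \eqref{SPDE-manifold} (immediately from Definition~\ref{def-inv-foliation}), is generated by $V$, and agrees with the constant foliation at $s=0$. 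By the uniqueness Lemma~\ref{lemma-unique-foli}, the two coincide: $\mathcal{M}_{t_0+s}^{h_0} = \mathcal{M}_{t_0}^{h_0}$ for all $s \geq 0$, i.e.\ $\mathcal{M}_t^{h_0} = \mathcal{M}_{t_0}^{h_0}$ for all $t \geq t_0$. The main obstacle throughout is the first part: getting from the pointwise condition $\nu(h_0)\in V$ to invariance of the full affine space $h_0+V$, which hinges on using the already-existing invariant foliation through $h_0$ to upgrade $\nu(h_0)\in V$ to $\psi'(0)\in V$ and hence to $\nu(\cdot)\in V$ on all of $h_0+V$.
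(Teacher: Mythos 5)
Your proof is correct and follows essentially the same route as the paper's (much terser) argument: the key step in both is that condition (\ref{nu-pre}) applied to the already-existing invariant foliation through $h_0$ shows $\nu$ maps $h_0+V$ into the single coset $\psi'(0)+V$, so $\nu(h_0)\in V$ forces $\psi'(0)\in V$ and hence $\nu(h)\in V$ on all of $h_0+V$, after which (\ref{Sigma-plus-V}) and Lemma~\ref{lemma-unique-foli} give the remaining statements. Your write-up simply fills in the details the paper leaves implicit.
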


\begin{proof}
Let $h_0 \in \mathcal{D}(A)$ be arbitrary. By condition
(\ref{nu-pre}) of Theorem \ref{thm-foliation} we have $\nu(h_0) \in
V$ if and only if $\nu(h) \in V$ for all $h \in h_0 + V$, which
means that $h_0 + V$ is an invariant manifold, proving
(\ref{singular-set-general}). Taking into account
(\ref{Sigma-plus-V}), we obtain the remaining statements.
\end{proof}

\begin{remark}\label{remark-singular-set}
Suppose the SPDE (\ref{SPDE-manifold}) has an affine realization
generated by $V$. For any $h_0 \in \mathcal{D}(A)$ we define the deterministic stopping time
\begin{align*}
t_0 = \inf \{ t \geq 0 : \psi(t) \in \Sigma \} \in [0,\infty]
\end{align*}
which, by Remark \ref{remark-para} and (\ref{Sigma-plus-V}), does not depend on the choice of the parametrization $\psi$ of $(\mathcal{M}_t^{h_0})_{t \geq 0}$. By Proposition \ref{prop-singular-set}, the strong solution $(r_t)_{t \geq 0}$ for (\ref{SPDE-manifold}) with $r_0 = h_0$ has the dichotomic behaviour
\begin{align}\label{dich-sets-1}
\mathbb{P}(r_t \notin \Sigma) &= 1, \quad t \in [0,t_0)
\\ \label{dich-sets-2} \mathbb{P}(r_t \in \Sigma) &= 1, \quad t \in [t_0,\infty)
\end{align}
i.e., up to time $t_0$, the solution proceeds
outside the singular set $\Sigma$, afterwards it stays in $\Sigma$, and
therefore even on an affine manifold. In particular, if $t_0 = 0$ we
have $\mathbb{P}(r_t \in \Sigma) = 1$ for all $t \geq 0$, and if
$t_0 = \infty$ we have $\mathbb{P}(r_t \notin \Sigma) = 1$ for all
$t \geq 0.$
\end{remark}

%\begin{remark}
%Note that the time $t_0$ is deterministic. By (\ref{Sigma-plus-V}),
%for any parametrization $\psi$ of the foliation
%$(\mathcal{M}_t^{h_0})_{t \geq 0}$ it is given by
%\begin{align*}
%t_0 = \inf \{ t \geq 0 : \psi(t) \in \Sigma \}.
%\end{align*}
%We observe the following dichotomic behaviour of the solutions for
%(\ref{SPDE-manifold}). Up to time $t_0$, the solution proceeds
%outside the singular $\Sigma$, afterwards it stays in $\Sigma$, and
%therefore even on an affine manifold. In particular, if $t_0 = 0$ we
%have $\mathbb{P}(r_t \in \Sigma) = 1$ for all $t \geq 0$, and if
%$t_0 = \infty$ we have $\mathbb{P}(r_t \notin \Sigma) = 1$ for all
%$t \geq 0.$
%\end{remark}

For our later investigations on the existence of affine
realizations, quasi-exponential functions (cf. \cite[Sec.
5]{Bj_Sv}), which we shall now introduce in this general context,
will play an important role. Inductively, we define the domains
\begin{align*}
\mathcal{D}(A^n) := \{ h \in \mathcal{D}(A^{n-1}) : A^{n-1} h \in
\mathcal{D}(A) \}, \quad n \geq 2
\end{align*}
as well as the intersection
\begin{align*}
\mathcal{D}(A^{\infty}) := \bigcap_{n \in \mathbb{N}}
\mathcal{D}(A^n).
\end{align*}

\begin{definition}
An element $h \in \mathcal{D}(A^{\infty})$ is called {\em
quasi-exponential} if
\begin{align}\label{QE-def}
\dim \langle A^n h : n \in \mathbb{N}_0 \rangle < \infty.
\end{align}
\end{definition}

\begin{lemma}\label{lemma-QE}
Let $h \in H$ be arbitrary. The following statements are
equivalent.
\begin{enumerate}
\item $h$ is quasi-exponential.

\item There exists $d \in \mathbb{N}$ such that $h \in \mathcal{D}(A^d)$ and $A^d h \in \langle h,Ah,\ldots,A^{d-1}h \rangle$.

%\item We have $h \in \mathcal{D}(A^{\infty})$ and there exists $d \in \mathbb{N}$ such that
%$\{ h,Ah,\ldots,A^{d-1}h \}$ is a basis of $\langle A^n h : n \in
%\mathbb{N}_0 \rangle$.

%\item There exists a finite dimensional subspace $V \subset \langle A^n h : n \in \mathbb{N}_0 \rangle$ with $h \in V$ such that
%\begin{align}\label{QE-cond}
%A v \in V \quad \text{for all $v \in V$.}
%\end{align}

\item There exists a finite dimensional subspace $V \subset \mathcal{D}(A)$ with $h \in V$ such that
\begin{align}\label{QE-cond}
A v \in V \quad \text{for all $v \in V$.}
\end{align}
\end{enumerate}
\end{lemma}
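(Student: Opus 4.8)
The plan is to prove the three characterizations of quasi-exponentiality are equivalent by a cyclic argument $(1) \Rightarrow (3) \Rightarrow (2) \Rightarrow (1)$, keeping careful track of the domains $\mathcal{D}(A^n)$ throughout.

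For $(1) \Rightarrow (3)$: assume $h$ is quasi-exponential, so $h \in \mathcal{D}(A^\infty)$ and $V := \langle A^n h : n \in \mathbb{N}_0 \rangle$ is finite dimensional. Certainly $h \in V$, and every $A^n h$ lies in $\mathcal{D}(A)$ since $h \in \mathcal{D}(A^\infty)$, so $V \subset \mathcal{D}(A)$. For the invariance $Av \in V$: any $v \in V$ is a finite linear combination $v = \sum_{n} c_n A^n h$, and since $A$ is linear on $\mathcal{D}(A)$ we get $Av = \sum_n c_n A^{n+1} h \in V$. This is the easy direction.

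For $(3) \Rightarrow (2)$: suppose such a $V$ exists with $\dim V =: d$. Since $V \subset \mathcal{D}(A)$ and $AV \subset V$, an induction shows $A^k h \in V \subset \mathcal{D}(A)$ for every $k$, so in particular $h \in \mathcal{D}(A^d)$. The vectors $h, Ah, \ldots, A^d h$ are $d+1$ elements of the $d$-dimensional space $V$, hence linearly dependent: there are scalars $c_0, \ldots, c_d$, not all zero, with $\sum_{k=0}^d c_k A^k h = 0$. The subtlety here is to guarantee that $c_d \neq 0$, so that $A^d h$ is genuinely expressible in terms of the lower-order iterates; I would handle this by taking $m$ minimal with $h, Ah, \ldots, A^m h$ linearly dependent (such $m \leq d$ exists), writing $A^m h \in \langle h, \ldots, A^{m-1} h \rangle$, and then applying $A$ repeatedly to climb from level $m$ up to level $d$ while staying inside $\langle h, \ldots, A^{d-1} h \rangle$ — this uses $AV \subset V$ and the fact that each intermediate iterate stays in the domain.

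For $(2) \Rightarrow (1)$: assume $h \in \mathcal{D}(A^d)$ with $A^d h = \sum_{k=0}^{d-1} c_k A^k h$. By applying $A$ to this relation (and checking that the right-hand side lies in $\mathcal{D}(A)$, which follows because $A^k h \in \mathcal{D}(A)$ for $k \leq d-1$ and, for the top term, because $A^{d-1}(A h)$... — more carefully, one shows inductively that $h \in \mathcal{D}(A^n)$ for all $n$ and that $A^n h \in \langle h, Ah, \ldots, A^{d-1} h \rangle$), one obtains by induction on $n$ that $h \in \mathcal{D}(A^\infty)$ and $A^n h$ always lies in the fixed finite-dimensional space $\langle h, Ah, \ldots, A^{d-1} h \rangle$. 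Hence $\dim \langle A^n h : n \in \mathbb{N}_0 \rangle \leq d < \infty$, so $h$ is quasi-exponential. The main obstacle in the whole argument is the bookkeeping with the domains $\mathcal{D}(A^n)$: since $A$ is unbounded, one cannot freely apply $A$ to a linear relation without first verifying membership in $\mathcal{D}(A)$, and the cleanest way around this is to carry the finite-dimensional, $A$-invariant subspace $V$ (or $\langle h, \ldots, A^{d-1} h\rangle$) through every step, since $AV \subset V \subset \mathcal{D}(A)$ makes all the iterated applications legitimate.
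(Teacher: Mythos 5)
Your proof is correct and rests on the same ingredients as the paper's: the minimal index at which the iterates $A^k h$ become linearly dependent, and an induction carried through a finite-dimensional $A$-invariant subspace of $\mathcal{D}(A)$ to legitimize each application of the unbounded operator. The only difference is cosmetic --- you run the cycle as $(1)\Rightarrow(3)\Rightarrow(2)\Rightarrow(1)$ while the paper does $(1)\Rightarrow(2)\Rightarrow(3)\Rightarrow(1)$, which makes your $(3)\Rightarrow(2)$ step a bit longer than the paper's trivial $(2)\Rightarrow(3)$, but the substance is identical.
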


\begin{proof}
(1) $\Rightarrow$ (2): This is clear for $h=0$, and for $h \neq 0$ there
exists, by (\ref{QE-def}), a minimal integer $d \in \mathbb{N}$ such that $h,A
h,\ldots,A^{d-1} h$ are linearly independent. Consequently, we have $A^d h \in \langle h,Ah,\ldots,A^{d-1}h \rangle$.

%Setting $V = \langle
%h,Ah,\ldots,A^{d-1}h \rangle$ we have $A^d h \in V$ and, by induction, we see
%that
%\begin{align*}
%A^n h \in V \quad \text{for all $n \geq d$,}
%\end{align*}
%whence $\{ h,Ah,\ldots,A^{d-1}h \}$ is a basis of $\langle A^n h : n
%\in \mathbb{N}_0 \rangle$.

(2) $\Rightarrow$ (3): The finite dimensional subspace $V = \langle
h,Ah,\ldots,A^{d-1}h \rangle$ has the desired properties.

(3) $\Rightarrow$ (1): Using (\ref{QE-cond}), by induction, for each
$n \in \mathbb{N}$ we have $h \in \mathcal{D}(A^n)$ and $A^n h \in
V$, which yields $h \in \mathcal{D}(A^{\infty})$ and (\ref{QE-def}),
whence $h$ is quasi-exponential.
\end{proof}

In the subsequent sections, $H$ will be a function space and $A = \frac{d}{dx}$ the differential operator.
Then, the domain $\mathcal{D}(A^{\infty})$ consists of all $C^{\infty}$-functions such that any derivative belongs to the function space $H$.
As Lemma \ref{lemma-QE} shows, a function $h \in \mathcal{D}(A^{\infty})$ is quasi-exponential if it satisfies
a linear ordinary differential equation of $d$-th order
\begin{align}\label{ODE}
A^d h = \sum_{n=0}^{d-1} c_n A^n h
\end{align}
for some $d \in \mathbb{N}$. In particular, any exponential function has this property, which explains the term \textit{quasi-exponential}. Note that (\ref{ODE}) implies that the finite dimensional subspace $V = \langle h,Ah,\ldots,A^{d-1}h \rangle \subset \mathcal{D}(A)$ is invariant under the operator $A$, i.e., we have $A V \subset V$.

Quasi-exponential functions will play a decisive role for the characterization of term structure models with an affine realization, see the subsequent Sections \ref{sec-vol-general}--\ref{sec-vol-const}.

\section{The space of forward curves}\label{sec-space}

In this section, we define the space of forward curves, on which we
will study the HJMM equation (\ref{HJMM}) in the forthcoming
sections. These spaces have been introduced in \cite[Sec.
5]{fillnm}.

We fix an arbitrary constant $\beta > 0$. Let $H_{\beta}$ be the
space of all absolutely continuous functions $h : \mathbb{R}_+
\rightarrow \mathbb{R}$ such that
\begin{align}\label{def-norm}
\| h \|_{\beta} := \bigg( |h(0)|^2 + \int_{\mathbb{R}_+} |h'(x)|^2
e^{\beta x} dx \bigg)^{\frac{1}{2}} < \infty.
\end{align}
Let $(S_t)_{t \geq 0}$ be the shift semigroup on $H_{\beta}$ defined
by $S_t h := h(t + \cdot)$ for $t \in \mathbb{R}_+$.

Since forward curves should flatten for large time to maturity $x$,
the choice of $H_{\beta}$ is reasonable from an economic point of
view.

\begin{theorem}\label{thm-group}
Let $\beta > 0$ be arbitrary.
\begin{enumerate}
\item The space $(H_{\beta},\| \cdot \|_{\beta})$ is a separable
Hilbert space.

\item For each $x \in \mathbb{R}_+$, the point
evaluation $h \mapsto h(x) : H_{\beta} \rightarrow \mathbb{R}$ is a
continuous linear functional.

\item $(S_t)_{t \geq 0}$ is a $C_0$-semigroup on $H_{\beta}$ with infinitesimal generator $\frac{d}{dx} :
\mathcal{D}(\frac{d}{dx}) \subset H_{\beta} \rightarrow H_{\beta}$,
$\frac{d}{dx}h = h'$, and domain
\begin{align*}
\mathcal{D} \bigg( \frac{d}{dx} \bigg) = \{ h \in H_{\beta} : h' \in
H_{\beta} \}.
\end{align*}

\item Each $h \in H_{\beta}$ is continuous, bounded and
the limit $h(\infty) := \lim_{x \rightarrow \infty} h(x)$ exists.

\item $H_{\beta}^0 := \{ h \in H_{\beta} : h(\infty) = 0 \}$ is a closed subspace of $H_{\beta}$.

\item There exists a universal constant
$C > 0$, only depending on $\beta$, such that for all $h \in
H_{\beta}$ we have the estimate
\begin{align}\label{estimate-c2}
\| h \|_{L^{\infty}(\mathbb{R}_+)} &\leq C \| h \|_{\beta}.
\end{align}

\item For each $\beta' > \beta$, we have $H_{\beta'} \subset
H_{\beta}$ and the relation
\begin{align}\label{est-embedding}
\| h \|_{\beta} \leq \| h \|_{\beta'}, \quad h \in H_{\beta'}.
\end{align}

\end{enumerate}
\end{theorem}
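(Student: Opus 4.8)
The plan is to verify the seven assertions of Theorem~\ref{thm-group} essentially in the order stated, since several later parts rely on earlier ones. For part (1), the natural approach is to recognise $\| \cdot \|_\beta$ as the norm induced by the inner product $\langle h, g \rangle_\beta := h(0) g(0) + \int_{\mathbb{R}_+} h'(x) g'(x) e^{\beta x} dx$, so $H_\beta$ is isometrically isomorphic to $\mathbb{R} \oplus L^2(\mathbb{R}_+; e^{\beta x} dx)$ via $h \mapsto (h(0), h')$; completeness and separability are then inherited from this product space, the only point needing care being that a Cauchy sequence $(h_n)$ really converges to an \emph{absolutely continuous} function, which follows by defining the limit as $h(x) := g_0 + \int_0^x g(\eta) d\eta$ where $(g_0, g) = \lim (h_n(0), h_n')$. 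For part (2), given $x \in \mathbb{R}_+$ I would write $h(x) = h(0) + \int_0^x h'(\eta) d\eta$ and bound $|h(x)| \leq |h(0)| + \big( \int_0^x e^{-\beta \eta} d\eta \big)^{1/2} \big( \int_0^x |h'(\eta)|^2 e^{\beta \eta} d\eta \big)^{1/2} \leq (1 + \beta^{-1/2}) \| h \|_\beta$ by Cauchy--Schwarz; part (6) is the same computation with the bound uniform in $x$, giving the constant $C = (1 + \beta^{-1/2})$ (or similar, depending on how one organises the estimate).

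For part (4), continuity of each $h$ is immediate from absolute continuity; boundedness is part (6); and existence of $h(\infty)$ follows because $h'(x) = e^{-\beta x/2} \cdot h'(x) e^{\beta x/2}$ is integrable on $\mathbb{R}_+$ by Cauchy--Schwarz (the product of $e^{-\beta x/2} \in L^2$ and $h'(x) e^{\beta x/2} \in L^2$), so $h(x) = h(0) + \int_0^x h'(\eta) d\eta$ converges as $x \to \infty$. Part (5) follows since the map $h \mapsto h(\infty)$ is continuous: indeed $|h(\infty)| \leq |h(0)| + \int_{\mathbb{R}_+} |h'(\eta)| d\eta \leq (1 + \beta^{-1/2}) \| h \|_\beta$ by the argument just given, so $H_\beta^0$ is the kernel of a continuous functional, hence closed. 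For part (7), if $\beta' > \beta$ then $e^{\beta x} \leq e^{\beta' x}$ for all $x \geq 0$, so $\int |h'|^2 e^{\beta x} dx \leq \int |h'|^2 e^{\beta' x} dx$, whence finiteness of $\| h \|_{\beta'}$ implies finiteness of $\| h \|_\beta$ together with the stated inequality.

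Part (3) is where the real work lies, and I expect it to be the main obstacle. The strong continuity $\| S_t h - h \|_\beta \to 0$ as $t \downarrow 0$ requires showing both $|h(t) - h(0)| \to 0$ (clear from continuity of $h$) and $\int_{\mathbb{R}_+} |h'(t+x) - h'(x)|^2 e^{\beta x} dx \to 0$; the latter I would handle by the standard density argument, approximating $h'$ in $L^2(\mathbb{R}_+; e^{\beta x} dx)$ by compactly supported continuous functions and using uniform continuity there, together with a uniform bound on $\| S_t \|$ for $t$ in a bounded interval (which itself needs a short computation since the weight $e^{\beta x}$ is not shift-invariant — shifting \emph{decreases} the weight, so in fact $\| S_t h \|_\beta \leq \| h \|_\beta$, which helps). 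Identifying the generator then proceeds as usual: one shows that for $h$ with $h' \in H_\beta$ the difference quotient $\frac{1}{t}(S_t h - h)$ converges to $h'$ in $H_\beta$, so $\mathcal{D}(\frac{d}{dx}) \supseteq \{ h \in H_\beta : h' \in H_\beta \}$ with $\frac{d}{dx} h = h'$; for the reverse inclusion one uses that if $h \in \mathcal{D}(\frac{d}{dx})$ with generator image $g$, then applying the continuous functionals from parts (2) yields $h(x) = h(0) + \int_0^x g(\eta) d\eta$ pointwise, forcing $h' = g \in H_\beta$. I would also invoke the general semigroup fact that $(I - \frac{d}{dx})$ is bijective onto $H_\beta$ to pin down the domain cleanly, solving the ODE $h - h' = g$ explicitly. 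These verifications are routine in spirit but the precise bookkeeping with the exponential weight is the part demanding genuine attention.
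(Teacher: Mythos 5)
Your argument is essentially correct, but it takes a genuinely different route from the paper in a purely practical sense: the paper does not prove parts (1)--(6) at all. It observes that $H_{\beta}$ is the space $H_w$ of Filipovi\'c's monograph \cite{fillnm} with weight $w(x)=e^{\beta x}$ and simply cites Theorem 5.1.1 and Corollary 5.1.1 there; the only statement proved in the paper is part (7), via exactly the one-line observation $e^{\beta x}\leq e^{\beta' x}$ that you also use. Your self-contained treatment --- identifying $H_{\beta}$ with $\mathbb{R}\oplus L^2(\mathbb{R}_+;e^{\beta x}dx)$ via $h\mapsto(h(0),h')$, deriving (2), (4), (5), (6) from the Cauchy--Schwarz bound $\int_0^{\infty}|h'(\eta)|\,d\eta\leq\beta^{-1/2}\big(\int_0^{\infty}|h'(\eta)|^2e^{\beta\eta}d\eta\big)^{1/2}$, and establishing strong continuity and the generator by the standard density and difference-quotient arguments --- is in substance the content of the cited results, so nothing is gained or lost mathematically; what you gain is independence from the reference, what the paper gains is brevity.

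One factual slip in your sketch of part (3): the shift semigroup on $H_{\beta}$ is \emph{not} a contraction, contrary to your parenthetical claim that $\|S_th\|_{\beta}\leq\|h\|_{\beta}$. Indeed
\begin{align*}
\|S_th\|_{\beta}^2=|h(t)|^2+e^{-\beta t}\int_t^{\infty}|h'(y)|^2e^{\beta y}\,dy,
\end{align*}
and while the second term is dominated by $\int_0^{\infty}|h'(y)|^2e^{\beta y}dy$, the first term $|h(t)|^2$ can exceed $|h(0)|^2$: take $\beta=1$, $h(0)=1$ and $h'(x)=e^{-x}\mathbbm{1}_{[0,\varepsilon]}(x)$, for which $\|S_{\varepsilon}h\|_{\beta}^2-\|h\|_{\beta}^2=u(1+u)>0$ with $u=1-e^{-\varepsilon}$. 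What is true, and is all that your density argument for strong continuity requires, is the uniform bound $\|S_th\|_{\beta}^2\leq(1+C^2)\|h\|_{\beta}^2$ for all $t\geq 0$, obtained by applying your own estimate (6) to the term $|h(t)|^2$. So the error is harmless to the structure of the proof, but the justification should be corrected.
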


\begin{proof}
Note that $H_{\beta}$ is the space $H_w$ from \cite[Sec.
5.1]{fillnm} with weight function $w(x) = e^{\beta x}$, $x \in
\mathbb{R}_+$. Hence, the first six statements follow from
\cite[Thm. 5.1.1, Cor. 5.1.1]{fillnm}. For each $\beta' > \beta$,
the observation
\begin{align*}
\int_{\mathbb{R}_+} |h'(x)|^2 e^{\beta x} dx \leq
\int_{\mathbb{R}_+} |h'(x)|^2 e^{\beta' x} dx, \quad h \in
H_{\beta'}
\end{align*}
shows $H_{\beta'} \subset H_{\beta}$ and (\ref{est-embedding}).
\end{proof}

\begin{lemma}\label{lemma-product-pre}
The following statements are valid.
\begin{enumerate}
\item For all $h,g \in H_{\beta}$ we have $hg \in H_{\beta}$ and the
multiplication map $m : H_{\beta} \times H_{\beta} \rightarrow
H_{\beta}$ defined as $m(h,g) := hg$ is a continuous, bilinear
operator.

\item For all $h,g \in \mathcal{D}(\frac{d}{dx})$ we have $hg \in
\mathcal{D}(\frac{d}{dx})$.
\end{enumerate}
\end{lemma}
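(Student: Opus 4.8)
The plan is to derive both claims from the Leibniz product rule $(hg)' = h'g + hg'$ combined with the continuous embedding $H_\beta \hookrightarrow L^\infty(\mathbb{R}_+)$ furnished by Theorem \ref{thm-group}(6).

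For part (1), I would first note that the product $hg$ of two elements of $H_\beta$ is again absolutely continuous: on each compact interval $[0,N]$ the restrictions of $h$ and $g$ are absolutely continuous and bounded, and the product of two such functions is absolutely continuous on $[0,N]$; hence $hg$ is locally absolutely continuous on $\mathbb{R}_+$ and $(hg)' = h'g + hg'$ holds almost everywhere. Next I estimate the norm: using $|h(0)| \le \|h\|_\beta$ directly from (\ref{def-norm}), the inequality $(a+b)^2 \le 2a^2 + 2b^2$, and the fact that each of $\int_{\mathbb{R}_+}|h'|^2 e^{\beta x}\,dx$ and $\int_{\mathbb{R}_+}|g'|^2 e^{\beta x}\,dx$ is bounded by the corresponding squared norm,
\[
\|hg\|_\beta^2 \le \|h\|_\beta^2\|g\|_\beta^2 + 2\|g\|_{L^\infty(\mathbb{R}_+)}^2\|h\|_\beta^2 + 2\|h\|_{L^\infty(\mathbb{R}_+)}^2\|g\|_\beta^2.
\]
Bounding the sup-norms by $C\|\cdot\|_\beta$ via (\ref{estimate-c2}) yields $\|hg\|_\beta \le K\|h\|_\beta\|g\|_\beta$ with $K := (1+4C^2)^{1/2}$. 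This shows $hg \in H_\beta$, and since $m$ is evidently bilinear, the same bound gives its continuity.

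For part (2), let $h,g \in \mathcal{D}(\frac{d}{dx})$, so that $h,h',g,g' \in H_\beta$ by the description of the domain in Theorem \ref{thm-group}(3). Part (1) already gives $hg \in H_\beta$ with almost-everywhere derivative $(hg)' = h'g + hg'$. Applying part (1) to the pairs $(h',g)$ and $(h,g')$ shows $h'g \in H_\beta$ and $hg' \in H_\beta$, hence $(hg)' = h'g + hg' \in H_\beta$; again by Theorem \ref{thm-group}(3) this means precisely $hg \in \mathcal{D}(\frac{d}{dx})$.

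The computations are entirely routine; the only point requiring a word of care is the absolute continuity of the product — needed to legitimise the Leibniz rule — which follows from the local boundedness of elements of $H_\beta$. Everything else is a direct application of the embedding estimate (\ref{estimate-c2}).
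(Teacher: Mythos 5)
Your proof is correct and follows essentially the same route as the paper: absolute continuity of the product via boundedness of $H_\beta$-elements, the Leibniz rule, the elementary inequality $(a+b)^2 \le 2a^2+2b^2$, and the sup-norm estimate (\ref{estimate-c2}) to obtain a bound of the form $\|hg\|_\beta \le K\|h\|_\beta\|g\|_\beta$; part (2) is likewise handled by feeding $h'g$ and $hg'$ back into part (1). The only (immaterial) difference is that you bound $|h(0)g(0)|$ by $\|h\|_\beta\|g\|_\beta$ directly from the norm definition, whereas the paper uses the $L^\infty$ bound there too, so your constant is $\sqrt{1+4C^2}$ instead of $\sqrt{C^4+4C^2}$.
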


\begin{proof}
The function $hg$ is absolutely continuous, because $h$ and $g$ are
absolutely continuous and bounded, see Theorem \ref{thm-group}. By
estimate (\ref{estimate-c2}) we obtain
\begin{align*}
\| hg \|_{\beta}^2 &= |h(0)|^2 |g(0)|^2 + \int_{\mathbb{R}_+} | h(x)
g'(x) + g(x) h'(x) |^2 e^{\beta x} dx
\\ &\leq \| h \|_{L^{\infty}(\mathbb{R}_+)}^2 \| g \|_{L^{\infty}(\mathbb{R}_+)}^2 + 2
\| h \|_{L^{\infty}(\mathbb{R}_+)}^2 \int_{\mathbb{R}_+} |g'(x)|^2
e^{\beta x} dx
\\ & \quad + 2 \| g \|_{L^{\infty}(\mathbb{R}_+)}^2
\int_{\mathbb{R}_+} |h'(x)|^2 e^{\beta x} dx
\\ &\leq C^4 \| h \|_{\beta}^2 \| g \|_{\beta}^2 + 2 C^2
\| h \|_{\beta}^2 \| g \|_{\beta}^2 + 2 C^2 \| g \|_{\beta}^2 \| h
\|_{\beta}^2 < \infty.
\end{align*}
Hence, we have $hg \in H_{\beta}$ and the
estimate
\begin{align*}
\| m(h,g) \|_{\beta} \leq \sqrt{C^4 + 4 C^2} \| h \|_{\beta} \| g
\|_{\beta},
\end{align*}
proving that $m$ is a continuous, bilinear operator.

If $h,g \in \mathcal{D}(\frac{d}{dx})$, we have $hg \in
C^1(\mathbb{R}_+)$ with $(hg)' = h'g + hg'$, whence $hg \in
\mathcal{D}(\frac{d}{dx})$ by the first statement.
\end{proof}

For $\lambda \in H_{\beta}$ we define $\Lambda := \mathcal{I}
\lambda := \int_0^{\bullet} \lambda(\eta) d\eta$, which belongs to
$C(\mathbb{R}_+)$, the space of all continuous functions from
$\mathbb{R}_+$ to $\mathbb{R}$.

\begin{lemma}\label{lemma-cont-int}
Let $0 < \beta < \beta'$ be arbitrary real numbers. For each
$\lambda \in H_{\beta'}^0$ we have $\Lambda \in H_{\beta}$ and the
map $\mathcal{I} : H_{\beta'}^0 \rightarrow H_{\beta}$ is a
continuous linear operator.
\end{lemma}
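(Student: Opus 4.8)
The plan is to show directly that $\Lambda = \mathcal{I}\lambda$ is absolutely continuous with $\| \Lambda \|_{\beta} < \infty$, and that the resulting linear map is bounded. First I would note that $\Lambda$ is absolutely continuous by construction, with $\Lambda' = \lambda$ almost everywhere and $\Lambda(0) = 0$, so that
\begin{align*}
\| \Lambda \|_{\beta}^2 = \int_{\mathbb{R}_+} |\lambda(x)|^2 e^{\beta x} dx.
\end{align*}
The task is therefore to bound $\int_{\mathbb{R}_+} |\lambda(x)|^2 e^{\beta x} dx$ by a constant multiple of $\| \lambda \|_{\beta'}^2$, using the extra decay encoded in $\lambda \in H_{\beta'}^0$, i.e. the fact that $\lambda(\infty) = 0$.

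The key step is to exploit $\lambda(\infty) = 0$ to write $\lambda(x) = -\int_x^{\infty} \lambda'(\eta) d\eta$, so that by Cauchy--Schwarz
\begin{align*}
|\lambda(x)|^2 \leq \bigg( \int_x^{\infty} |\lambda'(\eta)| d\eta \bigg)^2 \leq \bigg( \int_x^{\infty} e^{-\beta' \eta} d\eta \bigg) \bigg( \int_x^{\infty} |\lambda'(\eta)|^2 e^{\beta' \eta} d\eta \bigg) \leq \frac{e^{-\beta' x}}{\beta'} \| \lambda \|_{\beta'}^2 e^{-0}.
\end{align*}
Actually one should be a touch more careful: the cleaner route is to multiply by $e^{\beta x}$ first and integrate in $x$. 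I would apply Tonelli's theorem to the double integral
\begin{align*}
\int_{\mathbb{R}_+} |\lambda(x)|^2 e^{\beta x} dx \leq \int_{\mathbb{R}_+} e^{\beta x} \bigg( \int_x^{\infty} |\lambda'(\eta)| d\eta \bigg)^2 dx,
\end{align*}
expand the inner square as a double integral over $\{ \eta_1, \eta_2 \geq x \}$, and bound $|\lambda'(\eta_1)| |\lambda'(\eta_2)| \leq \frac{1}{2}(|\lambda'(\eta_1)|^2 + |\lambda'(\eta_2)|^2)$; by symmetry this reduces to controlling $\int_{\mathbb{R}_+} e^{\beta x} \int_x^{\infty} |\lambda'(\eta_1)|^2 \big( \int_x^{\infty} d\eta_2 \big) d\eta_1 dx$, which diverges, so the crude split is too lossy. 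The correct approach is the pointwise bound above: from $|\lambda(x)|^2 \leq \frac{1}{\beta'} e^{-\beta' x} \| \lambda \|_{\beta'}^2$ one gets $|\lambda(x)|^2 e^{\beta x} \leq \frac{1}{\beta'} e^{(\beta - \beta') x} \| \lambda \|_{\beta'}^2$, and since $\beta < \beta'$ this is integrable in $x$ with
\begin{align*}
\int_{\mathbb{R}_+} |\lambda(x)|^2 e^{\beta x} dx \leq \frac{1}{\beta'(\beta' - \beta)} \| \lambda \|_{\beta'}^2.
\end{align*}
Hence $\Lambda \in H_{\beta}$ with $\| \mathcal{I}\lambda \|_{\beta} \leq (\beta'(\beta'-\beta))^{-1/2} \| \lambda \|_{\beta'}$.

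Finally, linearity of $\mathcal{I}$ is immediate from linearity of integration, and the norm estimate just established shows $\mathcal{I} \in L(H_{\beta'}^0; H_{\beta})$, completing the proof. The main obstacle is purely the choice of estimate: one must resist the temptation to split the cross term and instead use the $\lambda(\infty)=0$ condition to get genuine exponential decay of $|\lambda(x)|$ at rate $\beta'$, which then beats the weight $e^{\beta x}$ precisely because $\beta < \beta'$; this is exactly where the hypothesis $\lambda \in H_{\beta'}^0$ (rather than merely $H_{\beta'}$) and the strict inequality $\beta < \beta'$ are both used.
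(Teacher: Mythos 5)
Your argument is correct and is essentially the paper's own proof: both use $\lambda(\infty)=0$ to write $\lambda(x)=-\int_x^\infty\lambda'(\eta)\,d\eta$, apply Cauchy--Schwarz with the splitting $e^{\beta'\eta/2}e^{-\beta'\eta/2}$, and integrate against $e^{\beta x}$ to obtain the same bound $\|\mathcal{I}\lambda\|_\beta^2\le\frac{1}{\beta'(\beta'-\beta)}\|\lambda\|_{\beta'}^2$. The digression about the Tonelli/symmetrization route (which you correctly discard as too lossy) and the stray factor ``$e^{-0}$'' are harmless; the final estimate and the conclusion of boundedness are exactly as in the paper.
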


\begin{proof}
Let $\lambda \in H_{\beta'}^0$ be arbitrary. Then $\mathcal{I}
\lambda$ is absolutely continuous. Since $\mathcal{I}\lambda(0) =
0$, using the Cauchy Schwarz inequality, we obtain
\begin{align*}
\| \mathcal{I} \lambda \|_{\beta}^2 &= \int_{\mathbb{R}_+}
\lambda(x)^2 e^{\beta x} dx = \int_{\mathbb{R}_+} \left(
\int_x^{\infty} \lambda'(y) e^{\frac{1}{2} \beta' y} e^{-\frac{1}{2}
\beta' y} dy \right)^2 e^{\beta x} dx
\\ &\leq \int_{\mathbb{R}_+} \left( \int_x^{\infty} \lambda'(y)^2 e^{\beta' y} dy \right) \left( \int_x^{\infty} e^{-\beta' y} dy
\right) e^{\beta x} dx
\\ &\leq \| \lambda \|_{\beta'}^2 \int_{\mathbb{R}_+} \frac{1}{\beta'} e^{-(\beta' - \beta)x} dx
= \frac{1}{\beta'(\beta' - \beta)} \| \lambda \|_{\beta'}^2,
\end{align*}
proving the assertion.
\end{proof}

%Hence, for $0 < \beta < \beta'$ the map $\mathcal{I} : H_{\beta'}^0
%\rightarrow H_{\beta}$ is Fr\'echet differentiable.

\section{Invariant foliations for the HJMM
equation}\label{sec-HJMM-fol}

We shall now investigate invariant foliations for the HJMM equation
(\ref{HJMM}) by working on the space of forward curves from the
previous section.

Let $0 < \beta < \beta'$ be arbitrary real numbers and let $\sigma :
H_{\beta} \rightarrow H_{\beta}$ be given.

\begin{assumption}\label{ass-HJMM}
We assume that $\sigma \in C^1(H_{\beta})$ with $\sigma(H_{\beta})
\subset H_{\beta'}^0$ and that there exist $L,M > 0$ such that
\begin{align*}
\| \sigma(h_1) - \sigma(h_2) \|_{\beta} &\leq L \| h_1 - h_2
\|_{\beta} \quad \text{for all $h_1,h_2 \in H_{\beta}$,}
\\ \| \sigma(h) \|_{\beta} &\leq M \quad \text{for all $h \in H_{\beta}$.}
\end{align*}
\end{assumption}

Using the notation of the previous section, the HJM drift term
(\ref{HJM-drift}) is given by
\begin{align*}
\alpha_{\rm HJM} = m(\sigma,\mathcal{I}\sigma).
\end{align*}
Recall that this choice of the drift ensures that the implied bond market (\ref{bond-market}) will be free of arbitrage opportunities.

According to \cite[Cor. 5.1.2]{fillnm} we have $\alpha_{\rm
HJM}(H_{\beta}) \subset H_{\beta}^0$ and there exists a constant
$K > 0$ such that
\begin{align*}
\| \alpha_{\rm HJM}(h_1) - \alpha_{\rm HJM}(h_2) \|_{\beta} &\leq K
\| h_1 - h_2 \|_{\beta} \quad \text{for all $h_1,h_2 \in
H_{\beta}$.}
\end{align*}
Hence, for each $h_0 \in H_{\beta}$ there exists a unique weak
solution for (\ref{HJMM}) with $r_0 = h_0$, see \cite[Thm. 6.5, Thm. 7.4]{Da_Prato}. Note that (\ref{HJMM})
is a particular example of the stochastic partial differential
equation (\ref{SPDE-manifold}) on the state space $H = H_{\beta}$ with generator $A = \frac{d}{dx}$ and drift $\alpha =
\alpha_{\rm HJM}$. Moreover, Lemmas \ref{lemma-product-pre},
\ref{lemma-cont-int} yield $\alpha_{\rm HJM} \in C^1(H_{\beta})$,
whence all required conditions from Assumption \ref{ass-SPDE} are
fulfilled.

Now let $(\mathcal{M}_t)_{t \geq 0}$ be a foliation generated by a
finite dimensional subspace $V \subset H_{\beta}$. We set $d := \dim
V$. In order to investigate invariance of $(\mathcal{M}_t)_{t \geq
0}$ for the HJMM equation (\ref{HJMM}), we directly switch to a
coordinate system. Let $\psi \in C^1(\mathbb{R}_+;H)$ be a
parametrization of $(\mathcal{M}_t)_{t \geq 0}$ and let $\{
\lambda_1,\ldots,\lambda_d \}$ be a basis of $V$. Then, the set $\{
\Lambda_1,\ldots,\Lambda_d \}$ is linearly independent in
$C(\mathbb{R}_+)$.

\begin{remark}\label{remark-lin-ind}
Let $E_1 \subset \{ 1,\ldots,d \}$ be an index set. We set
\begin{align*}
E_2 := \{ (i,j) \in E_1 \times E_1 : i < j \}.
\end{align*}
Then, there are subsets $D_1 \subset E_1$ and $D_2 \subset E_2$ such
that
\begin{align}\label{def-B}
B = \{ \Lambda_1,\ldots,\Lambda_d \} \cup \{ \Lambda_i^2 : i \in D_1
\} \cup \{ \Lambda_i \Lambda_j : (i,j) \in D_2 \}
\end{align}
is a basis of the vector space
\begin{align}\label{def-V}
W = \langle \Lambda_1,\ldots,\Lambda_d \rangle + \langle \Lambda_i^2
: i \in E_1 \rangle + \langle \Lambda_i \Lambda_j : (i,j) \in E_2
\rangle.
\end{align}
For each $m \in E_1 \setminus D_1$ there exist unique
$(c_{i}^{m})_{i = 1,\ldots,d} \subset \mathbb{R}$, $(d_{i}^{m})_{i
\in D_1} \subset \mathbb{R}$ and $(d_{ij}^{m})_{(i,j) \in D_2}
\subset \mathbb{R}$ such that
\begin{align}\label{lin-ind-square}
\Lambda_m^2 = \sum_{i=1}^d c_i^{m} \Lambda_i + \sum_{i \in D_1}
d_{i}^{m} \Lambda_i^2 + \sum_{(i,j) \in D_2} d_{ij}^{m} \Lambda_i
\Lambda_j,
\end{align}
and for each $(m,n) \in E_2 \setminus D_2$ there exist unique
$(c_{i}^{mn})_{i = 1,\ldots,d} \subset \mathbb{R}$, $(d_{i}^{mn})_{i
\in D_1} \subset \mathbb{R}$ and $(d_{ij}^{mn})_{(i,j) \in D_2}
\subset \mathbb{R}$ such that
\begin{align}\label{lin-ind-mixed}
\Lambda_m \Lambda_n = \sum_{i=1}^d c_i^{mn} \Lambda_i + \sum_{i \in
D_1} d_{i}^{mn} \Lambda_i^2 + \sum_{(i,j) \in D_2} d_{ij}^{mn}
\Lambda_i \Lambda_j.
\end{align}
\end{remark}

\begin{theorem}\label{thm-crucial}
The foliation $(\mathcal{M}_t)_{t \geq 0}$ is invariant for the HJMM
equation (\ref{HJMM}) if and only if we have
(\ref{domain-psi-SPDE}), (\ref{domain-lambda-SPDE}), there exist
$\mu,\gamma \in C^{0,1}(\mathbb{R}_+ \times
\mathbb{R}^d;\mathbb{R}^d)$ such that
\begin{align}\label{nu-at-zero}
\nu ( \psi(t) ) = \psi'(t) + \sum_{i=1}^d \mu_i(t,0) \lambda_i,
\quad t \in \mathbb{R}_+
\end{align}
and (\ref{sigma-tangent-foli-1}), there are $(a_i^k)_{i =
1,\ldots,d}^{k = 1,\ldots,d} \subset \mathbb{R}$, $(b_i^k)_{i \in
D_1}^{k = 1,\ldots,d} \subset \mathbb{R}$ and $(b_{ij}^k)_{(i,j) \in
D_2}^{k=1,\ldots,d} \subset \mathbb{R}$ such that for all $(t,y) \in
\mathbb{R}_+ \times \mathbb{R}^d$ we have
\begin{align}\label{const-Riccati-a}
&-\frac{\partial}{\partial y_k} \mu_i(t,y) + \frac{1}{2} \sum_{m \in
E_1 \setminus D_1} c_i^{m} \frac{\partial}{\partial
y_k}(\gamma_m(t,y)^2)
\\ \notag &+ \sum_{(m,n) \in E_2 \setminus D_2}
c_i^{mn} \frac{\partial}{\partial y_k}(\gamma_m(t,y) \gamma_n(t,y))
= a_i^k, \quad i=1,\ldots,d, \, k=1,\ldots,d
\\ \label{const-Riccati-b1} &\frac{1}{2} \frac{\partial}{\partial y_k}
(\gamma_i(t,y)^2) + \frac{1}{2} \sum_{m \in E_1 \setminus D_1}
d_i^{m} \frac{\partial}{\partial y_k}(\gamma_m(t,y)^2)
\\ \notag &+ \sum_{(m,n)
\in E_2 \setminus D_2} d_i^{mn} \frac{\partial}{\partial
y_k}(\gamma_m(t,y) \gamma_n(t,y)) = b_{i}^k, \quad i \in D_1, \,
k=1,\ldots,d
\\ \label{const-Riccati-b2} & \frac{\partial}{\partial y_k} (\gamma_i(t,y)\gamma_j(t,y)) +
\frac{1}{2} \sum_{m \in E_1 \setminus D_1} d_{ij}^{m}
\frac{\partial}{\partial y_k}(\gamma_m(t,y)^2)
\\ \notag &+ \sum_{(m,n) \in E_2 \setminus D_2}
d_{ij}^{mn} \frac{\partial}{\partial y_k}(\gamma_m(t,y)
\gamma_n(t,y)) = b_{ij}^k, \quad (i,j) \in D_2, \, k=1,\ldots,d
\end{align}
where $E_1 \subset \{ 1,\ldots,d \}$ is chosen such that $E_1
\supset \{ i = 1,\ldots,d : \gamma_i \not \equiv 0 \}$ and the
further quantities are chosen as in Remark \ref{remark-lin-ind}, and
we have the Riccati equations
\begin{align}\label{Riccati}
\frac{d}{dx} \Lambda_k + \sum_{i=1}^d a_i^k \Lambda_i + \sum_{i \in
D_1} b_i^k \Lambda_i^2 + \sum_{(i,j) \in D_2} b_{ij}^k \Lambda_i
\Lambda_j = \lambda_k(0), \quad k=1,\ldots,d.
\end{align}
\end{theorem}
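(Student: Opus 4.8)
The starting point is Corollary~\ref{cor-foli-1}, which already tells us that invariance of $(\mathcal{M}_t)_{t \geq 0}$ for a general SPDE is equivalent to (\ref{domain-psi-SPDE}), (\ref{domain-lambda-SPDE}) together with the existence of $\mu,\gamma \in C^{0,1}$ satisfying (\ref{nu-tangent-foli-1}) and (\ref{sigma-tangent-foli-1}). So the whole task is to rewrite the vector-field condition (\ref{nu-tangent-foli-1}) in the concrete HJMM setting, where $A = \frac{d}{dx}$ and $\nu(h) = \frac{d}{dx}h + \alpha_{\rm HJM}(h)$ with $\alpha_{\rm HJM} = m(\sigma,\mathcal{I}\sigma)$. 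The plan is: (i)~start from the equivalence in Corollary~\ref{cor-foli-1}; (ii)~on the leaf $\mathcal{M}_t = \psi(t) + V$, insert $h = \psi(t) + \sum_i y_i \lambda_i$ into $\alpha_{\rm HJM}$; (iii)~using $\sigma(h) = \sum_i \gamma_i(t,y)\lambda_i$ from (\ref{sigma-tangent-foli-1}) and the identity $\alpha_{\rm HJM}(h) = \frac{1}{2}\frac{d}{dx}\big(\int_0^\bullet \sigma(h)(\eta)d\eta\big)^2$ from (\ref{HJM-drift}), compute $\int_0^\bullet \sigma(h)(\eta)\,d\eta = \sum_i \gamma_i(t,y)\Lambda_i$, so that $\alpha_{\rm HJM}(h) = \frac{1}{2}\frac{d}{dx}\big(\sum_i \gamma_i \Lambda_i\big)^2 = \frac{1}{2}\frac{d}{dx}\big(\sum_i \gamma_i^2 \Lambda_i^2 + 2\sum_{i<j}\gamma_i\gamma_j \Lambda_i\Lambda_j\big)$; (iv)~feed this into (\ref{nu-tangent-foli-1}) and demand that the result lie in $\psi'(t) + V$ for all $(t,y)$.

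The key algebraic step is the bookkeeping of Remark~\ref{remark-lin-ind}. The products $\Lambda_i^2$ and $\Lambda_i\Lambda_j$ appearing in $\alpha_{\rm HJM}(h)$ need not lie in $\langle\Lambda_1,\dots,\Lambda_d\rangle$; we only control the subspace $W$ spanned by the $\Lambda_i$ together with those squares/mixed products that actually occur, i.e.\ those indexed by $E_1 \supset \{i : \gamma_i \not\equiv 0\}$ and $E_2$. Picking the basis $B$ of $W$ as in (\ref{def-B}), every $\Lambda_m^2$ with $m \in E_1\setminus D_1$ and every $\Lambda_m\Lambda_n$ with $(m,n)\in E_2\setminus D_2$ is expanded via (\ref{lin-ind-square}), (\ref{lin-ind-mixed}) into the basis $B$. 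Substituting these expansions, the condition $\nu(\psi(t)+\sum y_i\lambda_i) - \psi'(t) \in V$ says precisely that $\frac{d}{dx}\psi(t) + \frac{d}{dx}\sum y_i\lambda_i + \alpha_{\rm HJM}(h)$, after collecting the coefficients of the basis elements of $W$, has the $\Lambda_i^2$- and $\Lambda_i\Lambda_j$-components (for $(i,j)\in D_2$, $i\in D_1$) cancel, while its $\Lambda_i$-components reassemble into an element of $V$. Differentiating $\Lambda_k = \mathcal{I}\lambda_k$ once more and recalling $\frac{d}{dx}\Lambda_k = \lambda_k$ with $\Lambda_k(0)=0$, $\lambda_k(0)$ fixed, one sees that matching the $\Lambda_i$, $\Lambda_i^2$, $\Lambda_i\Lambda_j$ coefficients forces both: (a)~the Riccati system (\ref{Riccati}) for the $\Lambda_k$ with constant coefficients $a_i^k, b_i^k, b_{ij}^k$ — because $\frac{d}{dx}\lambda_k$ must be expressible through the $\Lambda$'s — and (b)~that the $y$-dependence of $\mu$ and of the products $\gamma_m\gamma_n$ is constrained exactly by (\ref{const-Riccati-a})--(\ref{const-Riccati-b2}), since the coefficient of each basis element of $W$ in the expression $\nu(h)-\psi'(t)-\sum_i\mu_i\lambda_i$ must vanish identically in $(t,y)$; taking $\partial/\partial y_k$ isolates the stated constants $a_i^k, b_i^k, b_{ij}^k$.

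For the converse I would run this computation backwards: given (\ref{domain-psi-SPDE}), (\ref{domain-lambda-SPDE}), the existence of $\mu,\gamma\in C^{0,1}$ with (\ref{nu-at-zero}), (\ref{sigma-tangent-foli-1}), (\ref{const-Riccati-a})--(\ref{const-Riccati-b2}) and the Riccati equations (\ref{Riccati}), I would verify that (\ref{nu-tangent-foli-1}) holds — evaluating at $y=0$ uses (\ref{nu-at-zero}), and the $y$-dependence is recovered by integrating (\ref{const-Riccati-a}) in $y$ and using (\ref{Riccati}) to absorb the quadratic terms back into $V$; then Corollary~\ref{cor-foli-1} gives invariance. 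One should check that the specific $\mu_i(t,y)$ obtained this way is the same as the one in the coordinate expansion, which is where (\ref{const-Riccati-b1}), (\ref{const-Riccati-b2}) are needed (they guarantee that the non-$V$ part of $\alpha_{\rm HJM}(h)$ genuinely cancels, not merely its $y$-derivative). I expect the main obstacle to be purely organizational: keeping the six families of indices ($E_1, E_2, D_1, D_2$ and their complements) straight while expanding $\big(\sum_i \gamma_i \Lambda_i\big)^2$, differentiating, and matching coefficients along the basis $B$ of $W$ — there is no deep analytic difficulty once Corollary~\ref{cor-foli-1} and the drift formula (\ref{HJM-drift}) are in hand, but the linear-algebra bookkeeping of which products survive and which are rewritten is delicate and must be done uniformly in $(t,y)$.
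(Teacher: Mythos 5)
Your proposal is correct and follows essentially the same route as the paper: invoke Corollary \ref{cor-foli-1}, insert the HJM drift (\ref{HJM-drift}) on the leaf via (\ref{sigma-tangent-foli-1}), differentiate in $y_k$, expand in the basis $B$ of $W$ from Remark \ref{remark-lin-ind} to force the coefficients to be the constants $a_i^k, b_i^k, b_{ij}^k$ (since the right-hand side $\lambda_k - \lambda_k(0)$ is independent of $(t,y)$), read off the Riccati equations (\ref{Riccati}), and reverse the computation for sufficiency. The only execution detail worth flagging is that the Riccati system arises after integrating the $y_k$-differentiated identity in $x$, so it is stated for $\Lambda_k$ with $\frac{d}{dx}\Lambda_k = \lambda_k$ rather than for $\frac{d}{dx}\lambda_k$ directly, but your sketch captures this correctly.
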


\begin{proof}
"$\Rightarrow$" Suppose $(\mathcal{M}_t)_{t \geq 0}$ is an invariant
foliation for (\ref{HJMM}). According to Corollary \ref{cor-foli-1}
we have (\ref{domain-psi-SPDE})--(\ref{sigma-tangent-foli-1}).
Relation (\ref{nu-at-zero}) follows by setting $y=0$ in
(\ref{nu-tangent-foli-1}). Inserting (\ref{sigma-tangent-foli-1})
into (\ref{nu-tangent-foli-1}) we get, by taking into account the
HJM drift condition (\ref{HJM-drift}),
\begin{align*}
\frac{d}{dx} \psi(t) + \sum_{i=1}^d y_i \frac{d}{dx} \lambda_i +
\frac{1}{2} \frac{d}{dx} \bigg( \sum_{i=1}^d \gamma_i(t,y) \Lambda_i
\bigg)^2 = \psi'(t) + \sum_{i=1}^d \mu_i(t,y) \lambda_i
\end{align*}
for all $(t,y) \in \mathbb{R}_+ \times \mathbb{R}^d$.
Differentiating with respect to $y_k$ we obtain
\begin{align*}
&\frac{d}{dx} \lambda_k + \frac{d}{dx} \bigg( \bigg( \sum_{i=1}^d
\gamma_i(t,y) \Lambda_i \bigg) \bigg( \sum_{i=1}^d
\frac{\partial}{\partial y_k} \gamma_i(t,y) \Lambda_i \bigg) \bigg)
\\ &= \sum_{i=1}^d \frac{\partial}{\partial y_k} \mu_i(t,y) \lambda_i,
\quad k=1,\ldots,d
\end{align*}
for all $(t,y) \in \mathbb{R}_+ \times \mathbb{R}^d$. Integrating
yields
\begin{align*}
\lambda_k - \sum_{i=1}^d \frac{\partial}{\partial y_k} \mu_i(t,y)
\Lambda_i + \sum_{i,j=1}^d \gamma_i(t,y) \frac{\partial}{\partial
y_k} \gamma_j(t,y) \Lambda_i \Lambda_j = \lambda_k(0), \quad k =
1,\ldots,d
\end{align*}
for all $(t,y) \in \mathbb{R}_+ \times \mathbb{R}^d$. Noting that
$E_1 \supset \{ i = 1,\ldots,d : \gamma_i \not \equiv 0 \}$, we can
express this equation as
\begin{align*}
&\lambda_k - \sum_{i=1}^d \frac{\partial}{\partial y_k} \mu_i(t,y)
\Lambda_i + \frac{1}{2} \sum_{i \in E_1} \frac{\partial}{\partial
y_k} (\gamma_i(t,y)^2) \Lambda_i^2
\\ &+ \sum_{(i,j) \in E_2}
\frac{\partial}{\partial y_k} (\gamma_i(t,y)\gamma_j(t,y)) \Lambda_i
\Lambda_j = \lambda_k(0), \quad k = 1,\ldots,d
\end{align*}
for all $(t,y) \in \mathbb{R}_+ \times \mathbb{R}^d$. Introducing
the functions $f_i : \mathbb{R}_+ \times \mathbb{R}^d \rightarrow
\mathbb{R}^d$, $i = 1,\ldots,d$ and $g_i : \mathbb{R}_+ \times
\mathbb{R}^d \rightarrow \mathbb{R}^d$, $i \in D_1$ as well as
$g_{ij} : \mathbb{R}_+ \times \mathbb{R}^d \rightarrow
\mathbb{R}^d$, $(i,j) \in D_2$ by
\begin{align*}
f_i(t,y) &:= -\mu_i(t,y) + \frac{1}{2} \sum_{m \in E_1 \setminus
D_1} c_i^{m} \gamma_m(t,y)^2 + \sum_{(m,n) \in E_2 \setminus D_2}
c_i^{mn} \gamma_m(t,y) \gamma_n(t,y),
\\ g_i(t,y) &:= \frac{1}{2} \gamma_i(t,y)^2 + \frac{1}{2} \sum_{m \in E_1 \setminus D_1} d_i^{m}
\gamma_m(t,y)^2 + \sum_{(m,n) \in E_2 \setminus D_2} d_i^{mn}
\gamma_m(t,y) \gamma_n(t,y),
\\ g_{ij}(t,y) &:= \gamma_i(t,y)\gamma_j(t,y) + \frac{1}{2}
\sum_{m \in E_1 \setminus D_1} d_{ij}^{m} \gamma_m(t,y)^2
\\ &\quad + \sum_{(m,n) \in E_2 \setminus D_2} d_{ij}^{mn} \gamma_m(t,y)
\gamma_n(t,y),
\end{align*}
we obtain, by taking into account (\ref{lin-ind-square}) and
(\ref{lin-ind-mixed}),
\begin{align*}
&\sum_{i=1}^d \frac{\partial}{\partial y_k} f_i(t,y) \Lambda_i +
\sum_{i \in D_1} \frac{\partial}{\partial y_k} g_i(t,y) \Lambda_i^2
+ \sum_{(i,j) \in D_2} \frac{\partial}{\partial y_k} g_{ij}(t,y)
\Lambda_i \Lambda_j
\\ &= -(\lambda_k - \lambda_k(0)), \quad k=1,\ldots,d
\end{align*}
for all $(t,y) \in \mathbb{R}_+ \times \mathbb{R}^d$. Since $B$
defined in (\ref{def-B}) is a basis of the vector space $W$ in
(\ref{def-V}), we deduce (\ref{const-Riccati-a}),
(\ref{const-Riccati-b1}), (\ref{const-Riccati-b2}) and the Riccati
equations (\ref{Riccati}).

"$\Leftarrow$": Relations (\ref{HJM-drift}),
(\ref{sigma-tangent-foli-1}), (\ref{lin-ind-square}),
(\ref{lin-ind-mixed})  yield
\begin{equation}\label{alpha}
\begin{aligned}
&\alpha_{\rm HJM}\bigg( \psi(t) + \sum_{i=1}^d y_i \lambda_i \bigg)
= \frac{1}{2} \frac{d}{dx} \bigg( \sum_{i,j=1}^d \gamma_i(t,y)
\gamma_j(t,y) \Lambda_i \Lambda_j \bigg)
\\ &= \frac{1}{2} \frac{d}{dx} \bigg( \sum_{i \in E_1} \gamma_i(t,y)^2
\Lambda_i^2 + 2 \sum_{(i,j) \in E_2} \gamma_i(t,y) \gamma_j(t,y)
\Lambda_i \Lambda_j \bigg)
\\ &= \frac{d}{dx} \bigg( \sum_{i=1}^d (\mu_i(t,y) + f_i(t,y)) \Lambda_i + \sum_{i \in D_1}
g_i(t,y) \Lambda_i^2 + \sum_{(i,j) \in D_2} g_{ij}(t,y) \Lambda_i
\Lambda_j \bigg)
\end{aligned}
\end{equation}
for all $(t,y) \in \mathbb{R}_+ \times \mathbb{R}^d$. In particular,
by setting $y = 0$, we have
\begin{equation}\label{alpha-zero}
\begin{aligned}
&\alpha_{\rm HJM}( \psi(t) )
\\ &= \frac{d}{dx} \bigg( \sum_{i=1}^d
(\mu_i(t,0) + f_i(t,0)) \Lambda_i + \sum_{i \in D_1} g_i(t,0)
\Lambda_i^2 + \sum_{(i,j) \in D_2} g_{ij}(t,0) \Lambda_i \Lambda_j
\bigg)
\end{aligned}
\end{equation}
for all $t \in \mathbb{R}_+$. Relations (\ref{alpha}),
(\ref{const-Riccati-a}), (\ref{const-Riccati-b1}),
(\ref{const-Riccati-b2}), (\ref{alpha-zero}) and the Riccati
equations (\ref{Riccati}) give us
\begin{align*}
&\alpha_{\rm HJM}\bigg( \psi(t) + \sum_{i=1}^d y_i \lambda_i \bigg)
= \frac{d}{dx} \bigg( \sum_{i=1}^d \bigg( \mu_i(t,y) + f_i(t,0) +
\sum_{k=1}^d a_i^k y_k \bigg) \Lambda_i
\\ &\quad + \sum_{i \in D_1} \bigg(
g_i(t,0) + \sum_{k=1}^d b_i^k y_k \bigg) \Lambda_i^2 + \sum_{(i,j)
\in D_2} \bigg( g_{ij}(t,0) + \sum_{k=1}^d b_{ij}^k y_k \bigg)
\Lambda_i \Lambda_j \bigg)
\\ &= \frac{d}{dx} \bigg( \sum_{i=1}^d ( \mu_i(t,0) + f_i(t,0) ) \Lambda_i +
\sum_{i \in D_1} g_i(t,0) \Lambda_i^2 + \sum_{(i,j) \in D_2}
g_{ij}(t,0) \Lambda_i \Lambda_j \bigg)
\\ &\quad + \frac{d}{dx} \sum_{i=1}^d (\mu_i(t,y) - \mu_i(t,0)) \Lambda_i
\\ &\quad + \frac{d}{dx} \sum_{k=1}^d y_k \bigg( \sum_{i=1}^d a_i^k \Lambda_i
+ \sum_{i \in D_1} b_i^k \Lambda_i^2 + \sum_{(i,j) \in D_2} b_{ij}^k
\Lambda_i \Lambda_j \bigg)
\\ &= \alpha_{\rm HJM}(\psi(t)) + \sum_{i=1}^d (\mu_i(t,y) -
\mu_i(t,0)) \lambda_i - \sum_{k=1}^d y_k \frac{d}{dx} \lambda_k
\end{align*}
for all $(t,y) \in \mathbb{R}_+ \times \mathbb{R}^d$. We conclude,
by furthermore incorporating (\ref{nu-at-zero}),
\begin{align*}
&\nu \bigg( \psi(t) + \sum_{i=1}^d y_i \lambda_i \bigg) =
\frac{d}{dx} \psi(t) + \sum_{i=1}^d y_i \frac{d}{dx} \lambda_i +
\alpha_{\rm HJM}\bigg( \psi(t) + \sum_{i=1}^d y_i \lambda_i \bigg)
\\ &= \nu(\psi(t)) + \sum_{i=1}^d (\mu_i(t,y) -
\mu_i(t,0)) \lambda_i = \psi'(t) + \sum_{i=1}^d \mu_i(t,y) \lambda_i
\end{align*}
for all $(t,y) \in \mathbb{R}_+ \times \mathbb{R}^d$, showing
(\ref{nu-tangent-foli-1}). According to Corollary \ref{cor-foli-1},
the foliation $(\mathcal{M}_t)_{t \geq 0}$ is an invariant for
(\ref{HJMM}).
\end{proof}

Note that in particular the system (\ref{Riccati}) of Riccati
equations is useful in order to gain knowledge about the existence
of an affine realization. We will exemplify Theorem
\ref{thm-crucial} in the subsequent sections in order to
characterize volatility structures for which the HJMM equation
(\ref{HJMM}) admits an affine realization. We will start with general
volatilities in Section \ref{sec-vol-general}, and will obtain results for particular volatility
structures as corollaries in Sections \ref{sec-cdv}--\ref{sec-short-rate}.

\section{Affine realizations for the HJMM equation with general volatility}\label{sec-vol-general}

In this section, we assume that the volatility $\sigma$ in the
HJMM equation (\ref{HJMM}) is of the form
\begin{align}\label{sigma-ddv-type}
\sigma(h) = \sum_{i=1}^p \Phi_i(h) \lambda_i, \quad h \in H_{\beta}
\end{align}
where $p \in \mathbb{N}$ denotes a positive integer,
$\Phi_1,\ldots,\Phi_p : H_{\beta} \rightarrow \mathbb{R}$ are
functionals and $\lambda_1,\ldots,\lambda_p \in H_{\beta'}^0$ are
linearly independent. We assume that $\Phi_i \in
C^2(H_{\beta};\mathbb{R})$ for $i=1,\ldots,p$ and that there exist
$L,M > 0$ such that for all $i=1,\ldots,p$ we have
\begin{align*}
|\Phi_i(h_1) - \Phi_i(h_2)| &\leq L \| h_1 - h_2 \|_{\beta} \quad
\text{for all $h_1,h_2 \in H_{\beta}$,}
\\ |\Phi_i(h)| &\leq M \quad \text{for all $h \in H_{\beta}$.}
\end{align*}
Then, Assumption \ref{ass-HJMM} is fulfilled.

Note that, in view of Lemma \ref{lemma-ddv}, this is the most
general volatility, which we can have for the HJMM equation
(\ref{HJMM}) with an affine realization. The corresponding HJM drift
term (\ref{HJM-drift}) is given by
\begin{align}\label{HJM-drift-ddv}
\alpha_{\rm HJM}(h) = \bigg( \sum_{i=1}^p \Phi_i(h) \lambda_i \bigg)
\bigg( \sum_{i=1}^p \Phi_i(h) \Lambda_i \bigg), \quad h \in
H_{\beta}.
\end{align}

\begin{proposition}\label{prop-general-vol}
Suppose there exist $h_1,\ldots,h_p \in H_{\beta}$ such that
$\sigma(h_1),\ldots,\sigma(h_p)$ are linearly independent, and $h_0
\in \mathcal{D}(\frac{d}{dx})$ such that one of the following
conditions is satisfied:
\begin{itemize}
\item We have
\begin{align}\label{prod-Phi-diff}
D(\Phi_i \Phi_j)(h_0)\lambda_k = 0, \quad i,j,k = 1,\ldots,p.
\end{align}
\item There exist $k,l \in \{ 1,\ldots,p
\}$ such that the functions
\begin{align}\label{prod-Phi-lin-ind}
h \mapsto D^2(\Phi_i \Phi_j) (h) (\lambda_k,\lambda_l), \quad h \in
h_0 + \langle \lambda_1,\ldots,\lambda_p \rangle
\end{align}
are linearly independent for $1 \leq i \leq j \leq p$.
\end{itemize}
If the HJMM equation (\ref{HJMM}) has an affine realization, then
$\lambda_1,\ldots,\lambda_p$ are quasi-exponential.
\end{proposition}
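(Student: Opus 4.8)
The plan is to extract from the existence of an affine realization the structural consequences already recorded in Sections \ref{sec-real-general}--\ref{sec-HJMM-fol} and then feed in the non-degeneracy conditions \eqref{prod-Phi-diff}, \eqref{prod-Phi-lin-ind} through a differentiation argument in the $V$-directions. \textbf{Step 1 (set-up).} An affine realization is generated by some finite dimensional $V$; by Lemma \ref{lemma-ddv} we get $\sigma(H_{\beta}) \subset V$, and since $\sigma(h_1),\ldots,\sigma(h_p)$ are linearly independent elements of $\langle \lambda_1,\ldots,\lambda_p \rangle$ this forces $\langle \lambda_1,\ldots,\lambda_p \rangle \subset V$. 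I extend $\{\lambda_1,\ldots,\lambda_p\}$ to a basis $\{\lambda_1,\ldots,\lambda_d\}$ of $V$; by Corollary \ref{cor-foli-1} each $\lambda_i \in \mathcal{D}(\frac{d}{dx})$. I then fix the foliation $(\mathcal{M}_t^{h_0})_{t \geq 0}$ through $h_0$ and reparametrize it so that $\psi(0) = h_0$ (possible since $h_0 \in \mathcal{M}_0^{h_0}$ and $V \ni h_0 - \pi_{V^\perp}\mathcal{M}_0$).

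\textbf{Step 2 (differentiating tangency).} By Theorem \ref{thm-foliation} we have $\nu(h) \in T\mathcal{M}_t$ for $h \in \mathcal{M}_t$; writing $\nu(h) = \frac{d}{dx}h + \alpha_{\rm HJM}(h)$, fixing $t$ and differentiating the $V$-valued map $v \mapsto \nu(\psi(t)+v) - \psi'(t)$ on $V$ gives $\frac{d}{dx}w + D\alpha_{\rm HJM}(h)w \in V$ for all $w \in V$ and all $h$ lying on some leaf. Running this over all starting points in $\mathcal{D}(\frac{d}{dx})$, and then using density of $\mathcal{D}(\frac{d}{dx})$, continuity of $h \mapsto D\alpha_{\rm HJM}(h)w$ and closedness of $V$, I obtain that $\frac{d}{dx}w + D\alpha_{\rm HJM}(h)w \in V$ for \emph{all} $h \in H_\beta$, $w \in V$; since $\Phi_i \in C^2$ makes $\alpha_{\rm HJM} \in C^2(H_\beta)$, one more differentiation in $h$ yields $D^2\alpha_{\rm HJM}(h)(w,g) \in V$ for all $h,g \in H_\beta$, $w \in V$. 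Using $\alpha_{\rm HJM}(h) = \sum_{i,j=1}^p \Phi_i(h)\Phi_j(h)\lambda_i\Lambda_j$ from \eqref{HJM-drift} one computes $D\alpha_{\rm HJM}(h)w = \sum_{i,j=1}^p (D(\Phi_i\Phi_j)(h)w)\,\lambda_i\Lambda_j$ and $D^2\alpha_{\rm HJM}(h)(w,g) = \sum_{i,j=1}^p (D^2(\Phi_i\Phi_j)(h)(w,g))\,\lambda_i\Lambda_j$.

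\textbf{Step 3, case \eqref{prod-Phi-lin-ind}.} Take $w = \lambda_k$, $g = \lambda_l$ with the indices from \eqref{prod-Phi-lin-ind}; symmetrizing in $i,j$ the second-order relation reads $\sum_{1 \leq i \leq j \leq p} q_{ij}(h)\,\mu_{ij} \in V$ for all $h$, where $\mu_{ii} := \lambda_i\Lambda_i$, $\mu_{ij} := \lambda_i\Lambda_j + \lambda_j\Lambda_i$ and $q_{ij}(h) := D^2(\Phi_i\Phi_j)(h)(\lambda_k,\lambda_l)$. Since by hypothesis the $\binom{p+1}{2}$ functions $\{q_{ij}\}_{i \leq j}$ are linearly independent on $h_0 + \langle \lambda_1,\ldots,\lambda_p\rangle$, their joint image spans $\mathbb{R}^{\binom{p+1}{2}}$, so I can pick points $h^{(1)},\ldots,h^{(N)}$ in that affine space for which the matrix $(q_{ij}(h^{(r)}))$ is invertible; inverting it expresses each $\mu_{ij}$ as a linear combination of the vectors $\sum_{i\le j} q_{ij}(h^{(r)})\mu_{ij} \in V$, whence $\mu_{ij} \in V$ for all $i \leq j$. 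Therefore $\alpha_{\rm HJM}(h) = \sum_i \Phi_i(h)^2\mu_{ii} + \sum_{i<j}\Phi_i(h)\Phi_j(h)\mu_{ij} \in V$ for every $h$. Plugging this back into $\nu(h) \in T\mathcal{M}_t$ and taking differences along $V$ gives $\frac{d}{dx}v \in V$ for all $v \in V$, i.e.\ $\frac{d}{dx}V \subset V$, so Lemma \ref{lemma-QE} shows that $\lambda_1,\ldots,\lambda_p$ are quasi-exponential.

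\textbf{Step 3, case \eqref{prod-Phi-diff}, and the main obstacle.} Evaluating the first-order relation of Step 2 at $h = h_0$ and $w = \lambda_k$ for $k \leq p$, condition \eqref{prod-Phi-diff} gives $D\alpha_{\rm HJM}(h_0)\lambda_k = 0$, hence $\frac{d}{dx}\lambda_k \in V$ for $k = 1,\ldots,p$; moreover, evaluating the coefficient relations \eqref{const-Riccati-b1}, \eqref{const-Riccati-b2} of Theorem \ref{thm-crucial} at $(t,y)=(0,0)$ (where $\psi(0)=h_0$) shows that the quadratic coefficients $b_i^k, b_{ij}^k$ with $k \leq p$ vanish, so the Riccati equations \eqref{Riccati} for these indices collapse to the linear relations $\frac{d}{dx}\lambda_k = -\sum_{i=1}^d a_i^k \lambda_i$. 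From here one confines $\langle \lambda_1,\ldots,\lambda_p\rangle$ to a finite dimensional $\frac{d}{dx}$-invariant subspace and concludes by Lemma \ref{lemma-QE}. The delicate point — and the main obstacle — is precisely this last step: knowing $\frac{d}{dx}\lambda_k \in V$ and that the reduced equations are linear only for the indices $k\le p$ does not by itself prevent the auxiliary basis vectors $\lambda_{p+1},\ldots,\lambda_d$ (which need not be quasi-exponential) from entering the iterated orbit $\{(\frac{d}{dx})^n\lambda_k\}$, so one must actually upgrade this to $\frac{d}{dx}\langle\lambda_1,\ldots,\lambda_p\rangle \subset \langle\lambda_1,\ldots,\lambda_p\rangle$; this is where the weighted structure of $H_\beta$ (forcing exponential decay of $\lambda_k'$, whereas the collapsed Riccati system a priori only forbids finite-time blow-up) and the membership $\lambda_i \in H_{\beta'}^0$ have to be exploited.
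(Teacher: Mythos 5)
Your treatment of the second bullet \eqref{prod-Phi-lin-ind} is correct and takes a genuinely different route from the paper's. The paper stays entirely inside the coordinate machinery of Theorem \ref{thm-crucial}: it argues by contradiction that $D_1 = D_2 = \emptyset$, so that \emph{all} $d$ Riccati equations \eqref{Riccati} lose their quadratic terms and $\frac{d}{dx}V \subset V$ follows. You instead differentiate the tangency condition $\nu(h) \in T\mathcal{M}_t$ twice in the $V$-directions, extend by density and closedness of $V$, and use the linear independence of the $q_{ij}$ to place every $\lambda_i\Lambda_j + \lambda_j\Lambda_i$ --- and hence all of $\alpha_{\rm HJM}(H_{\beta})$ --- inside $V$; subtracting values of $\nu$ along a leaf then gives $\frac{d}{dx}V \subset V$ directly and Lemma \ref{lemma-QE} finishes. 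This avoids the $D_1,D_2$ bookkeeping and is, if anything, cleaner; the density argument in your Step 2 is sound because $h \mapsto D\alpha_{\rm HJM}(h)w$ is continuous for fixed $w \in V$.

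For the first bullet \eqref{prod-Phi-diff}, however, your proof is not finished, and you say so yourself. You correctly obtain $b_i^k = b_{ij}^k = 0$ for $k \leq p$, hence $\frac{d}{dx}\lambda_k = -\sum_{i=1}^d a_i^k \lambda_i \in V$ for $k \leq p$, but you then stop at the observation that this does not control $(\frac{d}{dx})^n\lambda_k$ for $n \geq 2$: the coefficients $a_i^k$ with $i > p$ drag in the auxiliary basis vectors $\lambda_{p+1},\ldots,\lambda_d$, whose Riccati equations retain quadratic terms, so the orbit of $\lambda_k$ under $\frac{d}{dx}$ is not visibly confined to a finite-dimensional space. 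A complete proof must either rule out this quadratic feedback or exhibit a finite-dimensional $\frac{d}{dx}$-invariant subspace containing $\lambda_1,\ldots,\lambda_p$ (Lemma \ref{lemma-QE}(3)); your closing sketch about exploiting the weighted structure of $H_{\beta}$ and the membership $\lambda_i \in H_{\beta'}^0$ is a plan, not an argument. To be fair, the paper disposes of this case with the single sentence that the Riccati equations \eqref{Riccati} ``show that $\lambda_1,\ldots,\lambda_p$ are quasi-exponential,'' i.e.\ it asserts the conclusion from exactly the information you have derived; you have matched the paper's argument up to its last line and then flagged, rather than closed, the step the paper treats as immediate. As submitted, the first case is therefore incomplete.
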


\begin{proof}
Let $V \subset H_{\beta}$ be a finite dimensional subspace
generating the affine realization and set $d := \dim V$. Lemma
\ref{lemma-ddv} yields that $\sigma(h) \in V$ for all $h \in
H_{\beta}$. Since $\sigma(h_1),\ldots,\sigma(h_p)$ are linearly
independent, we obtain $\lambda_1,\ldots,\lambda_p \in V$, because 
relation (\ref{sigma-ddv-type}) yields that
\begin{align*}
\langle \lambda_1,\ldots,\lambda_p \rangle = \langle \sigma(h_1),\ldots,\sigma(h_p) \rangle.
\end{align*}
Choose $\lambda_{p+1},\ldots,\lambda_d \in V$ such that $\{
\lambda_1,\ldots,\lambda_d \}$ is a basis of $V$. Let $h_0 \in
\mathcal{D}(\frac{d}{dx})$ be such that one of the conditions above
is satisfied. Now we apply Theorem \ref{thm-crucial} to the
invariant foliation $(\mathcal{M}_t^{h_0})_{t \geq 0}$. In view of (\ref{sigma-tangent-foli-1}) and (\ref{sigma-ddv-type}),
the function $\gamma = \gamma(0,\cdot) : \mathbb{R}^d \rightarrow \mathbb{R}^d$ is given by
\begin{align*}
\gamma_i(y) &= \Phi_i \bigg( h_0 + \sum_{i=1}^d y_i \lambda_i
\bigg), \quad i=1,\ldots,p
\\ \gamma_i(y) &= 0, \quad i=p+1,\ldots,d.
\end{align*}
In particular, we have $\gamma \in C^{2}(\mathbb{R}^d)$ and we can choose $E_1 = \{ 1,\ldots,p \}$.

If (\ref{prod-Phi-diff}) is satisfied, then
(\ref{const-Riccati-b1}), (\ref{const-Riccati-b2}) give us $b_i^k =
0$ for all $i \in D_1$, $k=1,\ldots,p$ and $b_{ij}^k = 0$ for all
$(i,j) \in D_2$, $k=1,\ldots,p$. Consequently, the Riccati equations
(\ref{Riccati}) show that $\lambda_1,\ldots,\lambda_p$ are
quasi-exponential.

If there exist $k,l \in \{ 1,\ldots,p \}$ such that the functions
(\ref{prod-Phi-lin-ind}) are linearly independent for $1 \leq i \leq
j \leq p$, then we claim that $D_1 = D_2 = \emptyset$, which, in
view of the Riccati equations (\ref{Riccati}), implies that
$\lambda_1,\ldots,\lambda_p$ are quasi-exponential. Suppose, on the
contrary, that $D_1 \neq \emptyset$ or $D_2 \neq \emptyset$.

If $D_1 \neq \emptyset$, choose $i \in D_1$ and differentiate
(\ref{const-Riccati-b1}) with respect to $y_l$, which yields
\begin{align*}
&\frac{1}{2} D^2 \Phi_i^2(h)(\lambda_k,\lambda_l) + \frac{1}{2}
\sum_{m \in E_1 \setminus D_1} d_i^m D^2
\Phi_m^2(h)(\lambda_k,\lambda_l)
\\ &+ \sum_{(m,n) \in E_2 \setminus D_2} d_i^{mn} D^2(\Phi_m
\Phi_n)(h)(\lambda_k,\lambda_l) = 0
\end{align*}
for all $h \in h_0 + \langle \lambda_1,\ldots,\lambda_p \rangle$.
This contradicts the linear independence of (\ref{prod-Phi-lin-ind})
for $1 \leq i \leq j \leq p$.

Analogously, if $D_2 \neq \emptyset$, choosing $(i,j) \in D_2$ and
differentiating (\ref{const-Riccati-b2}) with respect to $y_l$
yields a contradiction to the linear independence of
(\ref{prod-Phi-lin-ind}) for $1 \leq i \leq j \leq p$.
\end{proof}

\begin{proposition}\label{prop-general-suff}
If $\lambda_1,\ldots,\lambda_p$ are quasi-exponential, then the HJMM
equation (\ref{HJMM}) has an affine realization.
\end{proposition}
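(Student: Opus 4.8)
The plan is to construct, for every initial curve $h_0 \in \mathcal{D}(\frac{d}{dx})$, an explicit invariant foliation generated by a fixed finite-dimensional subspace $V$, and then to verify invariance via the Riccati-type consistency conditions of Theorem \ref{thm-crucial}. The natural candidate for $V$ is built from the quasi-exponential functions $\lambda_1,\ldots,\lambda_p$ together with all their iterated integrals and the products that arise in the HJM drift term (\ref{HJM-drift-ddv}). Concretely, since each $\lambda_i$ is quasi-exponential, Lemma \ref{lemma-QE} gives a finite-dimensional subspace $U \subset \mathcal{D}(A^\infty)$ with $\lambda_1,\ldots,\lambda_p \in U$ and $AU \subset U$. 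Setting $\Lambda_i = \mathcal{I}\lambda_i$, the functions $\Lambda_i$ satisfy $\frac{d}{dx}\Lambda_i = \lambda_i \in U$, so the space $\tilde U := U + \langle \Lambda_1,\ldots,\Lambda_p\rangle$ is finite-dimensional with $A\tilde U \subset \tilde U + \mathbb{R}$ (the constants coming from $\lambda_i(0)$). Finally, products $\Lambda_i\Lambda_j$ of quasi-exponential functions are again quasi-exponential (their iterated derivatives stay in the finite-dimensional span generated by products of derivatives of the factors), so I would take $V$ to be the (finite-dimensional) span of $\tilde U$ together with all $\Lambda_i\Lambda_j$, $1 \le i,j \le p$, and all their $A$-iterates. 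By construction $AV \subset V$ and, crucially, $\frac{d}{dx}\big((\sum \Phi_i(h)\Lambda_i)^2\big) \in V$ for every $h$, i.e. $\alpha_{\rm HJM}(h) \in V$ whenever $\sigma(h) \in V$.

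Next I would fix the parametrization. Since $h_0 \in \mathcal{D}(\frac{d}{dx})$ may fail to lie in $V$, define $\psi(t) := S_t h_0 = h_0(t+\cdot)$, the orbit of $h_0$ under the shift semigroup, and set $\mathcal{M}_t := \psi(t) + V$. One checks $\psi \in C^1(\mathbb{R}_+;H_\beta)$ with $\psi'(t) = A\psi(t) = \frac{d}{dx}\psi(t)$ (this uses that $h_0 \in \mathcal{D}(A)$, so the orbit is differentiable), hence $(\mathcal{M}_t)_{t\ge 0}$ is a foliation generated by $V$ with $h_0 \in \mathcal{M}_0$. It remains to verify the conditions of Theorem \ref{thm-crucial}. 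Condition (\ref{domain-lambda-SPDE}) holds since $V \subset \mathcal{D}(\frac{d}{dx})$ by construction; (\ref{domain-psi-SPDE}) holds since shifts preserve $\mathcal{D}(\frac{d}{dx})$. For (\ref{sigma-tangent-foli-1}) we use $\sigma(\psi(t) + \sum y_i\lambda_i) = \sum_i \Phi_i(\psi(t)+\sum y_j\lambda_j)\lambda_i \in V$, defining $\gamma_i(t,y)$ accordingly and $\gamma_i \equiv 0$ for $i > p$; Lipschitz continuity follows from the assumed bounds on the $\Phi_i$. For (\ref{nu-at-zero}), since $\psi'(t) = \frac{d}{dx}\psi(t)$ and $\alpha_{\rm HJM}(\psi(t)) = \frac12\frac{d}{dx}(\sum_i \Phi_i(\psi(t))\Lambda_i)^2 \in V$, we get $\nu(\psi(t)) - \psi'(t) \in V$, giving the $\mu_i(t,0)$. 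The key point is that $AV \subset V$ makes $\frac{d}{dx}\Lambda_k \in V$, so the Riccati equations (\ref{Riccati}) are automatically solvable: $\frac{d}{dx}\Lambda_k = \lambda_k$ has an expansion in the basis $B$ of $W$ — this determines $a_i^k, b_i^k, b_{ij}^k$ — and since the left-hand side of (\ref{Riccati}) minus $\lambda_k(0)$ lies in $W$ and must vanish, constancy of those coefficients is exactly what the equation encodes.

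The main obstacle — and the only genuinely delicate part — is verifying the constancy conditions (\ref{const-Riccati-a})–(\ref{const-Riccati-b2}), i.e. that the $y_k$-derivatives of the quantities built from $\mu$ and $\gamma$ are constant in $(t,y)$. Here I would argue as follows: because $AV \subset V$, the map $h \mapsto \nu(h) - \frac{d}{dx}(\text{the }V\text{-valued piece})$ can be organized so that $\mu_i(t,y)$ is an affine function of the products $\gamma_m\gamma_n$ plus an $A$-linear contribution; differentiating in $y_k$ and using the explicit $A$-action on the basis $B$, the $\Lambda$-dependent terms reorganize into the fixed structure constants of $(V, A)$, which do not depend on $(t,y)$. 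More precisely, one inverts the chain of identities in the "$\Leftarrow$" direction of the proof of Theorem \ref{thm-crucial}: defining $a_i^k, b_i^k, b_{ij}^k$ as the coordinates of $\frac{d}{dx}\Lambda_k = \lambda_k$ in the basis $B$ (which are genuine constants because $V$ is $A$-invariant and finite-dimensional), one checks that with these constants the displayed relations (\ref{const-Riccati-a})–(\ref{const-Riccati-b2}) and (\ref{Riccati}) hold identically — the former because the $A$-structure of $V$ is what forces the drift correction to be affine, the latter by the very definition of the constants. Having verified all hypotheses of Theorem \ref{thm-crucial}, we conclude that $(\mathcal{M}_t^{h_0})_{t\ge 0}$ is invariant for (\ref{HJMM}); since $h_0 \in \mathcal{D}(\frac{d}{dx})$ was arbitrary, the HJMM equation has an affine realization generated by $V$.
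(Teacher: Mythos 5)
Your proposal is correct and is essentially the paper's argument: form the finite-dimensional $\frac{d}{dx}$-invariant space generated by the quasi-exponential $\lambda_1,\ldots,\lambda_p$, enlarge it by the products needed to absorb the HJM drift (the paper adjoins $\lambda_i\Lambda_j$, you adjoin $\Lambda_i\Lambda_j$ together with their $\frac{d}{dx}$-iterates, which yields a slightly larger but equally valid $V$ since $\alpha_{\rm HJM}(h)=\tfrac12\sum_{i,j}\Phi_i(h)\Phi_j(h)\frac{d}{dx}(\Lambda_i\Lambda_j)$), and parametrize each leaf by $\psi(t)=S_th_0$. The only real divergence is that the paper avoids the ``delicate'' constancy conditions (\ref{const-Riccati-a})--(\ref{const-Riccati-b2}) altogether by exhibiting $\mu$ and $\gamma$ explicitly (formulas (\ref{gamma-general}), (\ref{mu-general})) and checking (\ref{nu-tangent-foli-1})--(\ref{sigma-tangent-foli-1}) of Corollary \ref{cor-foli-1} directly --- precisely the ``invert the $\Leftarrow$ direction'' fallback you describe; the mechanism that makes this work is that $\frac{d}{dx}(\Lambda_i\Lambda_j)\in V$ forces $\Lambda_i\Lambda_j\in\langle\Lambda_1,\ldots,\Lambda_d\rangle$, so the quadratic obstructions cancel against the quadratic part of $\mu$.
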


\begin{proof}
Since $\lambda_1,\ldots,\lambda_p$ are quasi-exponential, the linear space
\begin{align*}
W := \Big\langle \bigg( \frac{d}{dx} \bigg)^n \lambda_1 : n \in \mathbb{N}_0 \Big\rangle + \ldots + \Big\langle \bigg( \frac{d}{dx} \bigg)^n \lambda_p : n \in \mathbb{N}_0 \Big\rangle \subset \mathcal{D} \bigg( \frac{d}{dx} \bigg)
\end{align*}
is finite dimensional and we have
\begin{align}\label{diff-W}
\frac{d}{dx} w \in W \quad \text{for all $w \in W$.}
\end{align}
Since $S_t H_{\beta'}^0 \subset H_{\beta'}^0$ for all $t \geq 0$, we
have $W \subset H_{\beta'}^0$. Set $q := \dim W$. There exist
$\lambda_{p+1},\ldots,\lambda_q \in W$ such that $\{
\lambda_1,\ldots,\lambda_q \}$ is a basis of $W$. We define the
subspace
\begin{align*}
V := W + \langle \lambda_i \Lambda_j : i,j = 1,\ldots,q \rangle.
\end{align*}
Note that $V \subset \mathcal{D}( \frac{d}{dx} )$ by Lemmas
\ref{lemma-product-pre}, \ref{lemma-cont-int}. Set $d := \dim V$ and
choose $\lambda_{q+1},\ldots,\lambda_d \in V$ such that $\{
\lambda_1,\ldots,\lambda_d \}$ is a basis of $V$. Relation
(\ref{diff-W}) implies
\begin{align}\label{span-Lambda}
\frac{d}{dx} \Lambda_i \in \langle 1,\Lambda_1,\ldots,\Lambda_q
\rangle, \quad i=1,\ldots,q.
\end{align}
By (\ref{span-Lambda}) and (\ref{diff-W}) we have
\begin{align*}
\frac{d}{dx}(\lambda_i \Lambda_j) = \lambda_i \frac{d}{dx} \Lambda_j
+ \Lambda_j \frac{d}{dx} \lambda_i \in V, \quad i,j=1,\ldots,q
\end{align*}
whence we have
\begin{align}\label{diff-V}
\frac{d}{dx} v \in V \quad \text{for all $v \in V$.}
\end{align}
Let $h_0 \in \mathcal{D}(\frac{d}{dx})$ be arbitrary. We define the
map $\psi \in C^1(\mathbb{R}_+;H_{\beta})$,
\begin{align}\label{psi-general}
\psi(t) &:= S_t h_0
\end{align}
the map $\gamma \in C^{0,1}(\mathbb{R}_+ \times
\mathbb{R}^d;\mathbb{R}^d)$,
\begin{align}\label{gamma-general} \gamma_i(t,y) &:=
\begin{cases}
\Phi_i ( \psi(t) + \sum_{j=1}^d y_j \lambda_j ), & i = 1,\ldots,p
\\ 0, & i = p+1,\ldots,d.
\end{cases}
\end{align}
and $\mu \in C^{0,1}(\mathbb{R}_+ \times \mathbb{R}^d;\mathbb{R}^d)$
as
\begin{align}\label{mu-general}
\mu_i(t,y) &:=
\begin{cases}
\gamma_k(t,y) \gamma_l(t,y) + \sum_{j=1}^d a_{ji} y_j & \text{if $i
\in \{ q+1,\ldots,d \}$ and $\lambda_i = \lambda_k \Lambda_l$}
\\ & \text{for
some $k,l \in \{ 1,\ldots,p \}$,}
\\ \sum_{j=1}^d a_{ji} y_j, & \text{otherwise,}
\end{cases}
\end{align}
where, due to (\ref{diff-V}), the $(a_{ij})_{i,j=1,\ldots,d} \subset
\mathbb{R}$ are chosen such that
\begin{align*}
\frac{d}{dx} \lambda_i = \sum_{j=1}^d a_{ij} \lambda_j, \quad
i=1,\ldots,d.
\end{align*}
Then, conditions
(\ref{domain-psi-SPDE})--(\ref{sigma-tangent-foli-1}) are fulfilled,
and therefore, by Corollary \ref{cor-foli-1}, the foliation
$(\mathcal{M}_t^{h_0})_{t \geq 0}$ generated by $V = \langle
\lambda_1,\ldots,\lambda_d \rangle$ with parametrization $\psi$ is
invariant for the HJMM equation (\ref{HJMM}).
\end{proof}

\begin{remark}\label{remark-geom-0}
Note that the proof of Proposition \ref{prop-general-suff}
simultaneously provides the construction of the affine realization.
For $h_0 \in \mathcal{D}(\frac{d}{dx})$ the invariant foliation
$(\mathcal{M}_t^{h_0})_{t \geq 0}$ is generated by $\langle
\lambda_1,\ldots,\lambda_d \rangle$ and has the parametrization
$\psi$ defined in (\ref{psi-general}). For $h \in
\mathcal{M}_{t_0}^{h_0}$ with some $t_0 \in \mathbb{R}_+$ the strong
solution $(r_t)_{t \geq 0}$ for (\ref{HJMM}) with $r_0 = h$ is given
by (\ref{strong-sol-real}), where the maps $\mu,\gamma :
\mathbb{R}_+ \times \mathbb{R}^d \rightarrow \mathbb{R}^d$ for the
state process (\ref{coord-proc}) are defined in
(\ref{gamma-general}), (\ref{mu-general}). We refer to \cite[Prop.
5.1, Prop. 6.1]{Bj_La} for similar results.
\end{remark}

\begin{remark}\label{remark-ec-state}
According to Remark \ref{remark-economic}, for any continuous linear operator $\ell \in L(H_{\beta};\mathbb{R}^d)$ with $\ell(V) = \mathbb{R}^d$ we can choose $\ell(r)$ as state process. For example, the components $\ell_i$ could be evaluations of benchmark yields or benchmark forward rates. This provides an economic interpretation of the affine realization.
\end{remark}

\begin{remark}\label{remark-geom-1}
Combining Proposition \ref{prop-singular-set} and relations
(\ref{HJM-drift-ddv}), (\ref{psi-general}), the singular set
$\Sigma$ is given by the $(d+1)$-dimensional linear space
\begin{align*}
\Sigma = \langle 1, \Lambda_1,\ldots,\Lambda_d \rangle,
\end{align*}
and, by Remark \ref{remark-singular-set}, for each $h_0 \in \mathcal{D}(\frac{d}{dx})$ we have
(\ref{dich-sets-1}), (\ref{dich-sets-2}), where $t_0$ denotes the deterministic stopping time
\begin{align*}
t_0 = \inf \{ t \geq 0 : S_t h_0 \in \Sigma \} \in [0,\infty]
\end{align*}
and where $(r_t)_{t \geq 0}$ denotes the strong solution for
(\ref{HJMM}) with $r_0 = h_0$.
\end{remark}

\begin{remark}\label{remark-supp-1}
Note that the conditions in Proposition \ref{prop-general-vol} are
singular events, because the respective conditions only have to be
satisfied for one single point $h_0$. Hence, Propositions
\ref{prop-general-vol}, \ref{prop-general-suff} yield that, apart
from degenerate examples like the CIR model, the existence of an
affine realization is essentially equivalent to the condition that
$\lambda_1,\ldots,\lambda_p$ are quasi-exponential (which means that
all quadratic terms in the system (\ref{Riccati}) of Riccati
equations disappear). This also supplements \cite[Prop. 6.4]{Bj_Sv},
which provides the sufficient implication.
\end{remark}

\section{Affine realizations for the HJMM equation with constant direction volatility}\label{sec-cdv}

In this section, we study the existence of affine realizations
for the HJMM equation (\ref{HJMM}) with constant direction
volatility, that is, we assume that the volatility $\sigma$ in the
HJMM equation (\ref{HJMM}) is of the form
\begin{align}\label{sigma-ddv-single}
\sigma(h) = \Phi(h) \lambda, \quad h \in H_{\beta}
\end{align}
where $\Phi : H_{\beta} \rightarrow \mathbb{R}$ is a functional and
$\lambda \in H_{\beta'}^0$ with $\lambda \neq 0$. We assume that
$\Phi \in C^2(H_{\beta};\mathbb{R})$ and that there exist $L,M > 0$
such that
\begin{align*}
| \Phi(h_1) - \Phi(h_2) | &\leq L \| h_1 - h_2 \|_{\beta} \quad
\text{for all $h_1,h_2 \in H_{\beta}$,}
\\ | \Phi(h) | &\leq M \quad \text{for all $h \in H_{\beta}$.}
\end{align*}
Then, Assumption \ref{ass-HJMM} is fulfilled.

\begin{corollary}\label{cor-ddv-nec}
Suppose $\Phi \not \equiv 0$. If the HJMM equation (\ref{HJMM}) has
an affine realization, then $\lambda$ is quasi-exponential or we
have
\begin{align}\label{CIR-1}
D^2 \Phi^2(h)(\lambda,\lambda) &= 0, \quad h \in \mathcal{D} \bigg(
\frac{d}{dx} \bigg) \quad \text{and}
\\ \label{CIR-2} D \Phi^2(h)\lambda &\neq 0, \quad h \in \mathcal{D} \bigg(
\frac{d}{dx} \bigg).
\end{align}
\end{corollary}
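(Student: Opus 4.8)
The plan is to derive Corollary \ref{cor-ddv-nec} directly from Proposition \ref{prop-general-vol} specialized to $p = 1$. Observe first that the volatility (\ref{sigma-ddv-single}) is of the form (\ref{sigma-ddv-type}) with $p = 1$, $\Phi_1 = \Phi$ and $\lambda_1 = \lambda$, and that the standing hypotheses of this section ($\Phi \in C^2(H_\beta;\mathbb{R})$, $\Phi$ globally Lipschitz and bounded, $\lambda \in H_{\beta'}^0 \setminus \{ 0 \}$) are precisely the $p = 1$ instances of the conditions imposed on $\Phi_1,\ldots,\Phi_p$ and $\lambda_1,\ldots,\lambda_p$ in Section \ref{sec-vol-general}. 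Hence the entire content of the proof is to check that, whenever the conclusion of the Corollary fails, the hypotheses of Proposition \ref{prop-general-vol} are met, so that Proposition \ref{prop-general-vol} forces $\lambda$ to be quasi-exponential.

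Next I would spell out what the two alternative conditions of Proposition \ref{prop-general-vol} become when $p = 1$. The linear independence requirement on $\sigma(h_1),\ldots,\sigma(h_p)$ reduces to the existence of a single $h_1 \in H_\beta$ with $\sigma(h_1) = \Phi(h_1)\lambda \neq 0$, which holds because $\Phi \not\equiv 0$ and $\lambda \neq 0$. Condition (\ref{prod-Phi-diff}) becomes $D\Phi^2(h_0)\lambda = 0$. Condition (\ref{prod-Phi-lin-ind}), with the only admissible choice $k = l = 1$, asks that the family consisting of the single function $h \mapsto D^2\Phi^2(h)(\lambda,\lambda)$, $h \in h_0 + \langle \lambda \rangle$, be linearly independent, i.e.\ that this function not vanish identically on the line $h_0 + \langle \lambda \rangle$; in particular it suffices that it be nonzero at one point of that line.

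The argument then proceeds by contraposition. Assume the HJMM equation (\ref{HJMM}) has an affine realization and that $\lambda$ is not quasi-exponential; I must show that both (\ref{CIR-1}) and (\ref{CIR-2}) hold. If (\ref{CIR-1}) failed, there would be $h^* \in \mathcal{D}(\frac{d}{dx})$ with $D^2\Phi^2(h^*)(\lambda,\lambda) \neq 0$; taking $h_0 = h^*$, the function $h \mapsto D^2\Phi^2(h)(\lambda,\lambda)$ is then nonzero at $h^*$, hence not identically zero on $h^* + \langle \lambda \rangle$, so condition (\ref{prod-Phi-lin-ind}) of Proposition \ref{prop-general-vol} is satisfied, forcing $\lambda$ to be quasi-exponential --- a contradiction. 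If (\ref{CIR-2}) failed, there would be $h^* \in \mathcal{D}(\frac{d}{dx})$ with $D\Phi^2(h^*)\lambda = 0$; taking $h_0 = h^*$, condition (\ref{prod-Phi-diff}) of Proposition \ref{prop-general-vol} holds, again forcing $\lambda$ to be quasi-exponential --- a contradiction. Since ``(\ref{CIR-1}) fails or (\ref{CIR-2}) fails'' exhausts the negation of ``(\ref{CIR-1}) and (\ref{CIR-2})'', both must hold, which is the claim.

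There is no genuine obstacle here; all the substance is already contained in Propositions \ref{prop-general-vol} and \ref{prop-general-suff}. The only points deserving a moment's attention are the degenerate reading of ``linearly independent'' for a family consisting of a single function (it simply means ``not the zero function''), and the observation that this function is defined on the affine line $h_0 + \langle \lambda \rangle$, so that a single point of non-vanishing already triggers Proposition \ref{prop-general-vol}. One should also note the trivial point that it is enough to treat separately the failure of (\ref{CIR-1}) and the failure of (\ref{CIR-2}), since together these cover the negation of the asserted conjunction.
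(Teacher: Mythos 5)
Your proof is correct and follows exactly the route the paper takes: the paper's own proof is the one-line remark that the corollary is an immediate consequence of Proposition \ref{prop-general-vol}, and your write-up simply makes explicit the $p=1$ specialization (including the degenerate reading of linear independence for a single function and the contrapositive case split), all of which is accurate.
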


\begin{proof}
This is an immediate consequence of Proposition
\ref{prop-general-vol}.
\end{proof}

\begin{remark}
Conditions (\ref{CIR-1}), (\ref{CIR-2}) mean that at each forward
curve $h \in \mathcal{D}(\frac{d}{dx})$ the functional $\Phi^2$ is affine, but not
constant, in direction $\lambda$, which is the typical feature for
CIR type models.
\end{remark}

\begin{corollary}\label{cor-ddv-suff}
If $\lambda$ is quasi-exponential, then the HJMM equation
(\ref{HJMM}) has an affine realization.
\end{corollary}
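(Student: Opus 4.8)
The plan is to recognize that the constant direction volatility structure (\ref{sigma-ddv-single}) is precisely the special case $p = 1$ of the general volatility structure (\ref{sigma-ddv-type}) treated in Section \ref{sec-vol-general}, and then to invoke Proposition \ref{prop-general-suff} verbatim. Concretely, I would set $p := 1$, $\lambda_1 := \lambda$ and $\Phi_1 := \Phi$. The standing hypotheses imposed here on $\Phi$ — namely $\Phi \in C^2(H_{\beta};\mathbb{R})$ together with the Lipschitz and boundedness estimates — are exactly the conditions required on $\Phi_1,\ldots,\Phi_p$ at the beginning of Section \ref{sec-vol-general} (in the case $p=1$), and $\lambda \in H_{\beta'}^0$ with $\lambda \neq 0$ matches the requirement that $\lambda_1,\ldots,\lambda_p \in H_{\beta'}^0$ be linearly independent. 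Hence Assumption \ref{ass-HJMM} holds, as already noted, and all the prerequisites of Proposition \ref{prop-general-suff} are in force.

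Given that $\lambda$ is quasi-exponential, the single-generator family $\{\lambda_1\} = \{\lambda\}$ is trivially a family of quasi-exponential functions, so Proposition \ref{prop-general-suff} applies directly and yields that the HJMM equation (\ref{HJMM}) has an affine realization. For the reader's orientation I would recall, following the proof of Proposition \ref{prop-general-suff}, what the realization looks like in this case: the space $W := \langle (\frac{d}{dx})^n \lambda : n \in \mathbb{N}_0 \rangle$ is finite dimensional, contained in $\mathcal{D}(\frac{d}{dx}) \cap H_{\beta'}^0$, and invariant under $\frac{d}{dx}$; the generating subspace is $V := W + \langle \lambda_i \Lambda_j : i,j = 1,\ldots,q \rangle$ with $q := \dim W$; and for each starting point $h_0 \in \mathcal{D}(\frac{d}{dx})$ the invariant foliation $(\mathcal{M}_t^{h_0})_{t \geq 0}$ generated by $V$ carries the parametrization $\psi(t) = S_t h_0$.

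There is essentially no obstacle here: the entire content of the corollary is the observation that constant direction volatility is the $p=1$ instance of the already-settled general case. The only point that warrants an explicit (one-line) check is that the regularity and growth assumptions placed on $\Phi$ in the present section coincide with those placed on the functionals $\Phi_1,\ldots,\Phi_p$ in Section \ref{sec-vol-general}, which is immediate. Thus the proof reduces to a citation of Proposition \ref{prop-general-suff}.
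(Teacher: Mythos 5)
Your proposal is correct and is exactly the paper's argument: the constant direction volatility (\ref{sigma-ddv-single}) is the $p=1$ case of (\ref{sigma-ddv-type}), and the corollary follows by a direct citation of Proposition \ref{prop-general-suff}. The additional details you recall about the construction of $V$ and the parametrization $\psi(t)=S_t h_0$ are accurate but not needed for the proof itself.
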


\begin{proof}
This is a direct consequence of Proposition \ref{prop-general-suff}.
\end{proof}

\begin{remark}\label{remark-supp-2}
Suppose we have $\Phi \not \equiv 0$ and there exists $h_0 \in
\mathcal{D}(\frac{d}{dx})$ such that
\begin{align*}
D^2 \Phi^2(h_0)(\lambda,\lambda) \neq 0 \quad \text{or} \quad D
\Phi^2(h_0) \lambda = 0.
\end{align*}
Then, by Corollaries \ref{cor-ddv-nec}, \ref{cor-ddv-suff}, the HJMM
equation (\ref{HJMM}) has an affine realization if and only if
$\lambda$ is quasi-exponential. Hence, we have relaxed the
assumptions from \cite[Prop. 6.1]{Bj_Sv}, where it is assumed that
$\Phi(h) \neq 0$ for all $h \in H_{\beta}$ and $D^2
\Phi^2(h)(\lambda,\lambda) \neq 0$ for all $h \in H_{\beta}$.
\end{remark}

\section{Affine realizations for the HJMM equation with constant volatility}\label{sec-vol-const}

In this section, we study the existence of affine realizations
for the HJMM equation (\ref{HJMM}) with constant volatility, i.e.,
we have
\begin{align*}
\sigma \equiv \lambda
\end{align*}
with $\lambda \in H_{\beta'}^0$, $\lambda \neq 0$. Then, Assumption
\ref{ass-HJMM} is fulfilled.

\begin{corollary}\label{cor-det}
The HJMM equation (\ref{HJMM}) has an affine realization if and only
if $\lambda$ is quasi-exponential.
\end{corollary}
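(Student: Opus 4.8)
The plan is to deduce Corollary~\ref{cor-det} from the results on constant direction volatility in the previous section, since constant volatility is the special case $\Phi \equiv 1$ of~(\ref{sigma-ddv-single}). The sufficiency direction is immediate: if $\lambda$ is quasi-exponential, then Corollary~\ref{cor-ddv-suff} (equivalently Proposition~\ref{prop-general-suff} with $p=1$) already yields an affine realization for the HJMM equation, with no assumption on $\Phi$ beyond those in Section~\ref{sec-cdv}, which are trivially satisfied by the constant functional $\Phi\equiv 1$.

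For the necessity direction, suppose the HJMM equation has an affine realization. Since $\sigma\equiv\lambda$ with $\lambda\neq 0$, the map $\Phi\equiv 1$ is not identically zero, so Corollary~\ref{cor-ddv-nec} applies and gives the dichotomy: either $\lambda$ is quasi-exponential, or conditions~(\ref{CIR-1}) and~(\ref{CIR-2}) both hold. But here $\Phi^2\equiv 1$ is a constant functional, so all its derivatives vanish; in particular $D\Phi^2(h)\lambda = 0$ for every $h\in\mathcal{D}(\frac{d}{dx})$, which directly contradicts~(\ref{CIR-2}). Hence the second alternative is impossible, and we conclude that $\lambda$ must be quasi-exponential.

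There is essentially no obstacle here; the work has all been done in Propositions~\ref{prop-general-vol} and~\ref{prop-general-suff} and in Corollaries~\ref{cor-ddv-nec} and~\ref{cor-ddv-suff}. The only point worth stating carefully is why the CIR-type escape route~(\ref{CIR-1})--(\ref{CIR-2}) is unavailable: it is precisely because a constant volatility makes $\Phi^2$ constant, so~(\ref{CIR-2}) fails. This is exactly the phenomenon anticipated in Remark~\ref{remark-supp-2}, where the degenerate (CIR) case is ruled out by a nonvanishing-gradient hypothesis on $\Phi^2$ that constant $\Phi$ violates. Alternatively, one could invoke Proposition~\ref{prop-general-vol} directly with $p=1$, noting that~(\ref{prod-Phi-diff}) holds at any $h_0$ because $D(\Phi_i\Phi_j)(h_0)\equiv 0$ for the constant functionals, which immediately forces $\lambda$ to be quasi-exponential.
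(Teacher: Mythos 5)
Your proposal is correct and follows exactly the paper's route: the paper also deduces the corollary directly from Corollaries \ref{cor-ddv-nec} and \ref{cor-ddv-suff} by taking $\Phi \equiv 1$ in (\ref{sigma-ddv-single}), and your observation that the CIR alternative (\ref{CIR-1})--(\ref{CIR-2}) is ruled out because $D\Phi^2 \equiv 0$ contradicts (\ref{CIR-2}) is precisely the detail the paper leaves implicit.
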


\begin{proof}
The assertion is a direct consequence of Corollaries
\ref{cor-ddv-nec}, \ref{cor-ddv-suff}, because $\sigma$ is of the
form (\ref{sigma-ddv-single}) with $\Phi \equiv 1$.
\end{proof}

\begin{remark}\label{remark-geom-2}
If $\lambda$ is quasi-exponential, we even obtain a $d$-dimensional
affine realization, where $d := \dim \langle (\frac{d}{dx})^n
\lambda : n \in \mathbb{N}_0 \rangle$. For $h_0 \in
\mathcal{D}(\frac{d}{dx})$ the invariant foliation
$(\mathcal{M}_t^{h_0})_{t \geq 0}$ is generated by $\langle
\lambda_1,\ldots,\lambda_d \rangle$ with
\begin{align*}
\lambda_i = \bigg( \frac{d}{dx} \bigg)^{i-1} \lambda, \quad
i=1,\ldots,d
\end{align*}
and has the parametrization
\begin{align*}
\psi(t) = - \frac{1}{2} \Lambda^2 + S_t \bigg( h_0 + \frac{1}{2}
\Lambda^2 \bigg), \quad t \in \mathbb{R}_+
\end{align*}
which can be shown by using Corollary \ref{cor-foli-1} (cf.
\cite[Prop. 4.1]{Bj_La}). Using Proposition \ref{prop-singular-set},
the singular set $\Sigma$ is given by the $(d+1)$-dimensional affine
space
\begin{align*}
\Sigma = - \frac{1}{2} \Lambda^2 + \langle 1,
\Lambda_1,\ldots,\Lambda_d \rangle,
\end{align*}
and, by Remark \ref{remark-singular-set}, for each $h_0 \in \mathcal{D}(\frac{d}{dx})$ we have
(\ref{dich-sets-1}), (\ref{dich-sets-2}), where $t_0$ denotes the deterministic stopping time
\begin{align*}
t_0 = \inf \Big\{ t \geq 0 : S_t \bigg( h_0 + \frac{1}{2} \Lambda^2
\bigg) \in \langle 1, \Lambda_1,\ldots,\Lambda_d \rangle \Big\} \in
[0,\infty]
\end{align*}
and where $(r_t)_{t \geq 0}$ denotes the strong solution for
(\ref{HJMM}) with $r_0 = h_0$.
\end{remark}

\begin{remark}
Not surprisingly, the statement of Corollary \ref{cor-det} coincides
with that of \cite[Prop. 5.1]{Bj_Sv}.
\end{remark}

\section{Short rate realizations for the HJMM equation}\label{sec-short-rate}

In this last section, we deal with affine realizations of dimension $d=1$. As explained in Remark \ref{remark-ec-state},
we can give an economic interpretation to the affine realization and choose (subject to slight regularity conditions) the
short rate $r(0)$ as state process. In this case, we also speak about a \textit{short rate realization}.

Let us assume that the volatility $\sigma$ is of the form
\begin{align}\label{vol-CIR-form}
\sigma(h) = \phi(h(0)), \quad h \in H_{\beta}
\end{align}
where $h \mapsto h(0) : H_{\beta} \rightarrow \mathbb{R}$ denotes the evaluation of the short rate and where $\phi : \mathbb{R} \rightarrow H_{\beta}$ is an arbitrary map. Then, the short rate process will be the solution of a one-dimensional stochastic differential equation.

Using our previous results with $d=1$ and taking into account the particular structure (\ref{vol-CIR-form}) of the volatility, we see that $\sigma$ is of one of the following three types: 

\begin{enumerate}
\item We can have
\begin{align*}
\sigma \equiv \rho
\end{align*}
with a constant $\rho \in \mathbb{R}$. This is the Ho-Lee model.

\item We can have
\begin{align*}
\sigma \equiv \rho e^{-c \bullet}
\end{align*}
with appropriate constants $\rho,c \in \mathbb{R}$. This is
the Hull-White extension of the Vasicek model.

\item We can have
\begin{align*}
\sigma(h) = \sqrt{c + d h(0)} \lambda, \quad h \in H_{\beta}
\end{align*}
with appropriate constants $c,d \in \mathbb{R}$, where $\Lambda = \int_0^{\bullet} \lambda(\eta) d\eta$ satisfies a Riccati equation of the kind
\begin{align*}
\frac{d}{dx} \Lambda + a \Lambda + b \Lambda^2 = \lambda(0).
\end{align*}
This is the Hull-White extension of the Cox-Ingersoll-Ross model.
\end{enumerate}

Thus, we have recognized the three well-known short rate models, which is completely in line with the existing literature, see,
e.g., \cite{Jeffrey, Bj_Sv, Filipovic-Teichmann-royal}.

\section{Conclusion}\label{sec-conclusion}

We have presented an alternative approach on the existence of affine
realizations for HJM interest rate models, which has the feature to be
applicable to be a wide class of models and being conceptually rather comprehensible.

Applying this approach, we have been able to provide further insights into the
structure of affine realizations. In particular, we have seen that essentially all volatility structures with an affine realization are of the form (\ref{sigma-ddv-type}) with $\lambda_1,\ldots,\lambda_p$ being quasi-exponential. All remaining volatilities with an affine realization, like the CIR model, may be considered as degenerate examples, see Remark \ref{remark-supp-1}. 

Our proofs have provided constructions of the affine realizations (see Remarks \ref{remark-geom-0}, \ref{remark-ec-state}) and we have been able to determine the singular set $\Sigma$, see Remark \ref{remark-geom-1}, where we have also exhibited the dichotomic behaviour of the forward rate process with respect to $\Sigma$. Moreover, for particular volatility
structures we have supplemented some known existence results.

\subsection*{Acknowledgement}

The author gratefully acknowledges the support from WWTF (Vienna Science and Technology Fund).

The author is also grateful to two anonymous referees for their helpful comments and suggestions.

\end{document}